\newcommand{\End}{\operatorname{End}}
\newcommand{\GL}{\operatorname{GL}}
\definecolor{darkgreen}{rgb}{0,0.4,0.1}
\definecolor{darkpurple}{rgb}{0.7,0.0,0.4}
\DeclareMathOperator{\rank}{rank}
\numberwithin{equation}{section}
\newtheorem{thm}{Theorem}[section]
\newtheorem{lem}[thm]{Lemma}
\newtheorem{cor}[thm]{Corollary}
\newtheorem{prop}[thm]{Proposition}
\newtheorem{defn}[thm]{Definition}
\newtheorem{conj}[thm]{Conjecture}
\newtheorem{rem}[thm]{Remark}
\numberwithin{equation}{section}
\numberwithin{figure}{section}
\numberwithin{table}{section}
\theoremstyle{plain}
\renewcommand\part[1]{\textbf{(#1)}}
\definecolor{mediterranean}{cmyk}{.67,0,.08,.3}
\definecolor{rose}{cmyk}{0,1.00,.20,0}
\definecolor{darkorchid}{cmyk}{.6,.9,0,.05}
\definecolor{butterfly}{cmyk}{.95,.59,0,.10}
\definecolor{springgreen}{cmyk}{1.00,0,.70,.02}
\definecolor{darkred}{cmyk}{0,1,1,.3}
\definecolor{nectarine}{cmyk}{0,0.70,1.00,0}
\definecolor{icyblue}{cmyk}{.84,.25,0,.06}
\tikzset{every loop/.style={min distance=10mm,looseness=10}}
\newcommand\undermat[2]{
	\makebox[0pt][l]{$\smash{\underbrace{\phantom{%
					\begin{matrix}#2\end{matrix}}}_{\text{$#1$}}}$}#2}
\title{The center of the walled Brauer algebra $B_{r,1}(\delta)$}
\author{Eirini Chavli}
\address{Institut für Diskrete Strukturen und Symbolisches Rechnen, Universit\"at Stuttgart, Pfaffenwaldring 57, 
	70569 Stuttgart, Germany.}
\email{eirini.chavli@mathematik.uni-stuttgart.de}
\author{Maud De Visscher}
\address{Department of Mathematics, City University of London, Northampton Square, London EC1V 0HB, United Kingdom.}
\email{maud.devisscher.1@city.ac.uk}
\author{Alison Parker}
\address{School of Mathematics, University of Leeds, LS2 9JT, United Kingdom.}
\email{a.e.parker@leeds.ac.uk}
\author{Sarah Salmon}
\address{Department of Mathematics, University of Colorado Boulder, Campus Box 395, 80309-0395, United States.}
\email{Sarah.Salmon@colorado.edu}
\author{Ulrica Wilson} 
\address{Science, Technology, Engineering and Mathematics Division Faculty, Morehouse College,830 Westview Drive, S.W.
	Atlanta, GA 30314, United States.}
\email{ulrica.wilson@morehouse.edu}
\thanks{We would like to thank the Women in Noncommutative
	Algebra and Representation Theory (WINART2) workshop, held at the University
	of Leeds in May 2019, where this project started.}
\begin{document}
			\maketitle
			\begin{center}
			\emph{To the memory of Emmelia Kokota}
			\end{center} 
		\begin{abstract}We show that the centre of the walled Brauer algebra $B_{r,1}(\delta)$ over the complex field $\mathbb{C}$, for any parameter $\delta\in \mathbb{C}$, is generated by the supersymmetric polynomials evaluated at the Jucys-Murphy elements. Moreover, 	we prove that its dimension is independent of the parameter $\delta$.	\end{abstract}
		
	\section{Introduction} Let $V=\mathbb{C}^n$ be the natural representation of $\GL_n(\mathbb{C})$ and let $r$ be a positive integer. There are two actions on the $r$th tensor product $V^{\otimes r}$: the first action is by $\GL_n(\mathbb{C})$, which is the (left) diagonal action. The second one  is the (right) action of the symmetric group $S_r$ which permutes the tensor factors. One can easily see that these two actions compute. However, there is a stronger relation between them, as asserted by \emph{Schur-Weyl duality} \cite{W}, namely that the span of the image of $S_r$ and $\GL_n(\mathbb{C})$ in $\End(V^{\otimes r})$ are centralizers of each other.
	
	The Brauer algebra $B_r(\delta)$ was introduced  by Brauer \cite{Br}, in the context of classical invariant theory, to play the role of the
	symmetric group in a corresponding Schur-Weyl duality for the
	orthogonal  groups (for $\delta$ positive integer) and for the  symplectic groups (for $\delta$ negative integer). 
	
	
	The walled Brauer algebra $B_{r,s}(\delta)$ is a subalgebra of $B_{r+s}(\delta)$ and it was introduced independently by Turaev  and by Koike \cite{K,T}. It appears in another generalisation of Schur-Weyl duality, when considering the action of  $\GL_n(\mathbb{C})$ on mixed tensor space $V^{\otimes r}\otimes (V^*)^{\otimes s}$.
	
Brauer algebras and walled Brauer algebras are particular examples of \emph{diagram} algebras. Their representation theory has been studied extensively, see for example \cite{CDM,CDDM,CD,Mar,Rui} and references therein. In particular, they are semisimple unless the parameter $\delta$ is a small integer compared to $r$ (and $s$). In the semisimple case, there is an explicit construction of the simple modules. Even in the non-semisimple cases, their representation theory is well-understood: they are cellular algebras and we have an explicit combinatorial formula for their decomposition numbers.  One question however remains open in the non-semisimple case, namely the description of the centre of these algebras.  The aim of this paper is to make some progress in the case of the walled Brauer algebra. 

Much of the work to study the structure of these diagram algebras has been done by analogy with the group algebra of the symmetric group. The study of their centres is no exception. So we start by recalling the classical results on the centre of $\mathbb{C}S_r$.

As for the centre of any group algebras of a finite group, the centre of $\mathbb{C}S_r$ has a basis given by the conjugacy class sums, indexed in this case by partitions of $r$. But there is another description, in terms of the so-called Jucys-Murphy elements, first introduced in \cite{J,Mu}, defined by
$$L_k:=\sum_{j=1}^{k-1}(j,k),\,\, 1\leq k\leq r.$$
These elements play an essential role in the Okounkov-Vershik approach to the representation theory of the symmetric groups \cite{OV}. The center of $\mathbb{C}S_r$ can be described as the algebra of all the symmetric polynomials evaluated in the Jucys-Murphy elements (for more details, see \cite{J,M}).

Analogues of these Jucys-Murphy elements for the algebra $B_{r,s}(\delta)$ were introduced in \cite{BS, RS, SS} and conjectures were made about using some versions of symmetric polynomials in these to describe the centre of the walled Brauer algebra (see \cite[Remark 2.6]{BS} and \cite[Conjecture 7.3]{SS}).
In \cite{JK} Jung and Kim, inspired by \cite[Conjecture 7.3]{SS},  introduced a renormalisation of these  Jucys-Murphy elements, which we denote again as $L_1,\dots, L_{r+s}$. They proved that \emph{supersymmetric polynomials} in these elements are central in $B_{r,s}(\delta)$ for all values of the parameter $\delta$ and that moreover,  when $B_{r,s}(\delta)$ is semisimple, this is in fact the whole centre. The key fact here is that when an algebra is semisimple, the dimension of its centre is given by the number of isomorphism classes of simple modules. 
Jung and Kim conjectured in \cite[Conjecture 5.4]{JK} that the result still hold in all cases. The aim of this paper is to investigate this question. We give a strategy for the general case, following the work of Shalile \cite{Sha} and proves this conjecture in the case when $r$ is arbitrary and $s=1$.

\medskip

{\bf Structure of the paper.}
Section 2 contains the definition of the walled Brauer algebra in terms of its diagram basis and multiplication. One should note that $B_{r,s}(\delta)$ contains $\mathbb{C}(S_r\times S_s)$ as a subalgebra and that in fact, it is generated by this subalgebra together with one more element, which we denote by $e$. We also recall the definition of the Jucys-Murphy elements, supersymmetric polynomials and state Jung-Kim's result for the centre in the semisimple case. In Section 3, we obtain a lower bound for the dimension of the centre in the general case, using Jung-Kim's result and a base-change argument. 
In Section 4 we recall the construction of the generalised walled cycle types due to Shalile \cite{Sha}. The sums of all diagrams with the same cycle type give a basis  for the centraliser algebra of the product of symmetric group $S_r\times S_s$ in the walled Brauer algebra $B_{r,s}(\delta)$.  Thus, in order to find the centre of the walled Brauer algebra, we only need to impose one additional condition, namely commutation with the extra generator $e$. This,  together with the lower bound obtained in Section 4 gives a general strategy to find the centre. We then apply this strategy explicitly in Section 5 for the special case $B_{r,1}(\delta)$.

	\section{Preliminaries} 
	\subsection{Walled Brauer algebras}\label{def} Let $r$ and $s$ be nonnegative integers. An $(r,s)$-\emph{walled Brauer diagram} is a graph drawn in a rectangle with $(r+s)$ vertices on its top and bottom edges, numbered $1,\dots, r+s$ in order from left to right. Each vertex is connected by a strand to exactly one other vertex. In addition, there is a vertical wall separating the left $r$ vertices from the right $s$ vertices, such that the following conditions hold: 
	\begin{enumerate} 
		\item A \emph{propagating line} connects a vertex on the top row with one on the bottom row, and it cannot cross the wall.
		\item A \emph{northern arc} (respectively, \emph{southern arc}) connects vertices on the top row (respectively, on the bottom row), and it must cross the wall.
		\end{enumerate} 
	For example, the following graphs are $(4,2)$-Brauer diagrams:
		\[	x:=\vcenter{\hbox{
			\begin{tikzpicture}[scale=0.75]
				\draw[gray,very thick] (0,0) rectangle (7,2);
				\foreach \x in {1,2,3,4,5,6} \filldraw (\x,2) circle (2pt);
				\foreach \x in {1,2,3,4,5,6} \filldraw (\x,0) circle (2pt);
				\draw[gray, thick, dashed] (4.5,2.25) to (4.5, -0.25);
				\draw[thick] (1,2) to (2,0);
				\draw[thick] (2,2) to (1,0);
				\draw (3,2) to [bend right=80] (6,2);
					\draw (4,2) to [bend right=80] (5,2);
				\draw (3,0) to [bend left=80] (5,0);
					\draw (4,0) to [bend left=80] (6,0);
	\end{tikzpicture}}}\,,
	\hspace{1.5em}
	y:=\vcenter{\hbox{
			\begin{tikzpicture}[scale=0.75]
				\draw[gray,very thick] (0,0) rectangle (7,2);
				\foreach \x in {1,2,3,4,5,6} \filldraw (\x,2) circle (2pt);
				\foreach \x in {1,2,3,4,5,6} \filldraw (\x,0) circle (2pt);
				\draw[gray, thick, dashed] (4.5,2.25) to (4.5, -0.25);
				\draw[thick] (1,2) to (1,0);
				\draw[thick] (2,2) to (2,0);
				\draw (3,2) to [bend right=80] (5,2);
				\draw (4,2) to [bend right=80] (6,2);
				\draw (3,0) to [bend left=80] (5,0);
				\draw (4,0) to [bend left=80] (6,0);
	\end{tikzpicture}}}\]

	Let $\delta$ be a complex number. The \emph{walled Brauer algebra} $B_{r,s}(\delta)$ is the $\mathbb{C}$-linear span of the $(r,s)$-walled Brauer diagrams with the multiplication
	defined as follows: The product of two $(r,s)$-walled Brauer diagrams $d_1$ and $d_2$ is determined by putting $d_1$ above $d_2$ and identifying the bottom vertices of $d_1$ with the top vertices of $d_2$. Let $n$ be the number of
	 loops in the middle row so obtained.  The product $d_1d_2$ is given by $\delta^n$ times the resulting diagram with loops omitted. 
	 
	 For example, for the diagrams $x$ and $y$ above we have:
	 \[	xy=\delta^2 \vcenter{\hbox{
 \begin{tikzpicture}[scale=0.75]
 \draw[gray,very thick] (0,0) rectangle (7,2);
 \foreach \x in {1,2,3,4,5,6} \filldraw (\x,2) circle (2pt);
 \foreach \x in {1,2,3,4,5,6} \filldraw (\x,0) circle (2pt);
 \draw[gray, thick, dashed] (4.5,2.25) to (4.5, -0.25);
 \draw[thick] (1,2) to (2,0);
 \draw[thick] (2,2) to (1,0);
 \draw (3,2) to [bend right=80] (6,2);
 \draw (4,2) to [bend right=80] (5,2);
 \draw (3,0) to [bend left=80] (5,0);
 \draw (4,0) to [bend left=80] (6,0);
\end{tikzpicture}}}
	 \hspace{1.5em}
	 yx=\delta\vcenter{\hbox{
	 		\begin{tikzpicture}[scale=0.75]
	 			\draw[gray,very thick] (0,0) rectangle (7,2);
	 			\foreach \x in {1,2,3,4,5,6} \filldraw (\x,2) circle (2pt);
	 			\foreach \x in {1,2,3,4,5,6} \filldraw (\x,0) circle (2pt);
	 			\draw[gray, thick, dashed] (4.5,2.25) to (4.5, -0.25);
	 			\draw[thick] (1,2) to (2,0);
	 			\draw[thick] (2,2) to (1,0);
	 			\draw (3,2) to [bend right=80] (5,2);
	 			\draw (4,2) to [bend right=80] (6,2);
	 			\draw (3,0) to [bend left=80] (5,0);
	 			\draw (4,0) to [bend left=80] (6,0);
	 \end{tikzpicture}}}\]
	 The dimension of  $B_{r,s}(\delta)$  equals 
	 $(r+s)!$ (see, for example,  \cite[2.2]{BS}).
	 
	 We denote by $s_i\, (1\leq i \leq r-1 \text{ or } r+1\leq i\leq r+s-1)$ and $e$ the following  $(r,s)$-walled Brauer diagrams: 
	$$s_i:=\vcenter{\hbox{
			\begin{tikzpicture}[scale=0.75]
	 	\draw[gray, very thick] (0,0) rectangle (9,2);
	 	\foreach \x in {1,3,4,5,6,8} \filldraw (\x,0) circle (2pt);
	 	\foreach \x in {1,3,4,5,6,8} \filldraw (\x,2) circle (2pt);
	 	\draw (1,2) node[above]{\scriptsize $1$};
	 	\draw (8,2) node[above]{\scriptsize $r+s$};
	 	\draw (4,0) to (5,2);
	 	\draw (4,2) to (5,0);
	 	\draw (2,1) node{$\cdots$};
	 	\draw (7,1) node{$\cdots$};
	 	\foreach \x in {1,3,6,8} \draw (\x,0) to (\x,2);
	 	\draw (4,2) node[above]{\scriptsize $i$};
	 	\draw (5,2) node[above]{\scriptsize $i+1$};
	 \end{tikzpicture}}}\,, \,\,\,\,
	 e:=\vcenter{\hbox{
	 		\begin{tikzpicture}[scale=0.75]
	 	\draw[gray, very thick] (0,0) rectangle (9,2);
	 	\foreach \x in {1,3,4,5,6,8} \filldraw (\x,0) circle (2pt);
	 	\foreach \x in {1,3,4,5,6,8} \filldraw (\x,2) circle (2pt);
	 	\draw (1,2) node[above]{\scriptsize $1$};
	 	\draw (8,2) node[above]{\scriptsize $r+s$};
	 	\draw (4,0) to [bend left=80] (5,0);
	 	\draw (4,2) to [bend right=80] (5,2);
	 	\draw (2,1) node{$\cdots$};
	 	\draw (7,1) node{$\cdots$};
	 	\foreach \x in {1,3,6,8} \draw (\x,0) to (\x,2);
	 	\draw (4,2) node[above]{\scriptsize $r$};
	 	\draw (5,2) node[above]{\scriptsize $r+1$};
	 	\draw[gray, thick, dashed] (4.5,2.25) to (4.5, -0.25);
	 \end{tikzpicture}}}$$
 Note that $B_{0,n}(\delta)\simeq B_{n,0}(\delta)\simeq \mathbb{C}S_n$, the group algebra of the symmetric group $S_n$ on $n$ letters. 
 It's easy to check that the algebra $B_{r,s}(\delta)$ is generated by the elements $s_i\,(1\leq i \leq r-1 \text{ or } r+1\leq j\leq r+s-1)$ and $e$. 

For any $(r,s)$-walled Brauer diagram $d$, we denote by $d^*$ the \emph{flip diagram} of $d$,  obtained by horizontally flipping $d$. This define a $\mathbb{C}$-linear anti-automorphism $*: B_{r,s}(\delta)\rightarrow B_{r,s}(\delta)$.

The following theorem is Theorem 6.3 in \cite{CDDM} and it provides a criterion of the semisimplicity of the algebra $B_{r,s}(\delta)$.

\begin{thm}\label{ss} The walled Brauer algebra $B_{r,s}(\delta)$ is semisimple if and only if one of the following holds:
	\begin{enumerate}
		\item $r=0$ or $s=0$,
		\item $\delta \not \in \mathbb{Z}$, 
		\item $|\delta|> r+s-2$,
		\item $\delta=0$, and $(r,s)\in \{(1,2),\, (1,3),\,(2,1),\,(3,1)\}.$
		\end{enumerate} 
\end{thm}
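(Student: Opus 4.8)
The plan is to deduce this semisimplicity criterion from a description of the blocks of $B_{r,s}(\delta)$. Recall that $B_{r,s}(\delta)$ is a cellular algebra whose cell (standard) modules $\Delta_{t}(\lambda,\mu)$ are indexed by an integer $0\le t\le\min(r,s)$ — the number of northern, equivalently southern, arcs — together with a pair of partitions $(\lambda,\mu)$ satisfying $|\lambda|=r-t$ and $|\mu|=s-t$; each $\Delta_{t}(\lambda,\mu)$ carries a symmetric bilinear form. By the Graham--Lehrer theory, $B_{r,s}(\delta)$ is semisimple if and only if all these forms are nondegenerate, equivalently if and only if every block contains a single cell module. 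Since $\dim B_{r,s}(\delta)=(r+s)!=\sum_{t,\lambda,\mu}\bigl(\dim\Delta_{t}(\lambda,\mu)\bigr)^{2}$, the Gram determinants are polynomials in $\delta$ which are nonzero at every large positive integer $n\ge r+s$ (there $B_{r,s}(n)\cong\End_{\GL_n}(V^{\otimes r}\otimes (V^{*})^{\otimes s})$, a semisimple algebra); hence $B_{r,s}(\delta)$ is semisimple for all but finitely many $\delta$, and when $r=0$ or $s=0$ it equals $\mathbb{C}S_{r+s}$ and is always semisimple. This disposes of case (1) and of the ``generic'' content of the statement.

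To get the sharp criterion I would use the tower structure of the family $\bigl(B_{r,s}(\delta)\bigr)_{r,s}$. Since $e^{2}=\delta e$, for $\delta\ne0$ the element $\delta^{-1}e$ is idempotent, and there are algebra isomorphisms $e\,B_{r,s}(\delta)\,e\cong B_{r-1,s-1}(\delta)$ and $B_{r,s}(\delta)\big/\bigl(B_{r,s}(\delta)\,e\,B_{r,s}(\delta)\bigr)\cong\mathbb{C}(S_r\times S_s)$. This exhibits $\bigl(B_{r,s}(\delta)\bigr)$ as a tower of recollement in the sense of Cox--De Visscher--Martin, which reduces the block structure to a combinatorial linkage rule on weights: descending from layer $t$ to layer $t-1$ amounts to adding one box to each of $\lambda$ and $\mu$, and the two cell modules can lie in a common block only when $\delta$ equals a prescribed expression in the contents of the two added boxes. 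From this one reads off that distinct weights are never linked when $\delta\notin\mathbb{Z}$, which is case (2), and that a linkage forces $|\delta|$ to be at most a bound that, for Young diagrams with at most $r+s$ boxes in total, never exceeds $r+s-2$; this gives semisimplicity when $|\delta|>r+s-2$, which is case (3). Conversely, whenever $r,s\ge1$ and $\delta\in\mathbb{Z}$ with $0\ne|\delta|\le r+s-2$, one exhibits an explicit linked pair of weights, so $B_{r,s}(\delta)$ is not semisimple.

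The remaining, and most delicate, case is $\delta=0$: here $e$ is nilpotent ($e^{2}=0$), the idempotent $\delta^{-1}e$ no longer exists, and the recollement arguments must be adapted. Cellularity and the cell modules survive the specialisation $\delta=0$, so one can rerun the linkage analysis with extra care; it shows that $B_{r,s}(0)$ is non-semisimple for every pair with $r,s\ge1$ outside an explicit short list of small $(r,s)$ for which the construction of a linked partner breaks down. Those few remaining pairs are then settled by direct computation of the relevant cell modules and their Gram matrices — for instance $B_{1,1}(0)\cong\mathbb{C}[x]/(x^{2})$ is non-semisimple, whereas $B_{1,2}(0)$, $B_{2,1}(0)$, $B_{1,3}(0)$ and $B_{3,1}(0)$ turn out semisimple, and by the flip isomorphism $B_{r,s}(\delta)\cong B_{s,r}(\delta)$ it is enough to examine $(1,2)$ and $(1,3)$. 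This produces exactly case (4) and completes the equivalence. I expect the main obstacle to be making the linkage rule quantitatively sharp enough to simultaneously yield linked pairs for every $\delta\in\mathbb{Z}$ with $0\ne|\delta|\le r+s-2$ and to isolate precisely this four-element exceptional family at $\delta=0$.
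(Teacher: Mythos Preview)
The paper does not give its own proof of this statement: it is quoted as Theorem~6.3 of \cite{CDDM} and used as a black box. Your proposal is therefore not competing with any argument in this paper; rather, it is a sketch of the strategy of the original reference. The ingredients you invoke --- cellularity, the idempotent $\delta^{-1}e$ and the isomorphism $eB_{r,s}(\delta)e\cong B_{r-1,s-1}(\delta)$, the tower-of-recollement machinery of Cox--Martin--Parker--Xi / Cox--De~Visscher--Martin, and the combinatorial linkage on bipartitions in terms of contents --- are precisely those used in \cite{CDDM}, as is your handling of the degenerate case $\delta=0$ (where $e$ becomes nilpotent and the small pairs must be checked directly; your computation $B_{1,1}(0)\cong\mathbb{C}[x]/(x^{2})$ is correct).

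As a plan this is sound; as a proof it remains a sketch, and you yourself identify where the work lies. The sharp bound $|\delta|\le r+s-2$ does not fall out of the linkage rule formally: in \cite{CDDM} linkage is phrased via $\delta$-balanced bipartitions, and one must exhibit, for each integer $\delta$ in the stated range and each $(r,s)$, an explicit balanced pair to witness non-semisimplicity, and then argue separately that no such pair exists once $|\delta|>r+s-2$. Likewise the $\delta=0$ analysis requires its own chain of arguments (cellularity alone does not immediately give the linkage principle without the idempotent). None of this is carried out in the present paper, so if you want a self-contained proof you would essentially be reproducing Sections~5--6 of \cite{CDDM}; for the purposes of this paper a citation suffices.
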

Therefore, for a fixed pair $(r,s)$ the algebra $B_{r,s}(\delta)$ is semisimple except for finitely many values $\delta \in \mathbb{C}$.

\subsection{Jucys-Murphy elements}\label{jm}
There are different definitions for Jucys-Murphy elements for the walled Brauer algebra $B_{r,s}(\delta)$. In this paper, we use Definition 2.1 of \cite{JK}.

Consider the transposition $(a,b)$ given by the diagram
$$\begin{array}{lcl}\vcenter{\hbox{
		\begin{tikzpicture}[scale=0.75]
			\draw[gray, very thick] (0,0) rectangle (15,2);
			\foreach \x in {1,3,4,5,7,8,9,11,12,14} \filldraw (\x,0) circle (2pt);
			\foreach \x in {1,3,4,5,7,8,9,11,12,14} \filldraw (\x,2) circle (2pt);
			\draw (2,1) node{$\cdots$};
			\draw (6,1) node{$\cdots$};
				\draw (10,1) node{$\cdots$};
					\draw (13,1) node{$\cdots$};
					\foreach \x in {1,3,5,7,9,11,12,14} \draw (\x,0) to (\x,2);
					\draw (4,0) to (8,2);
					\draw (4,2) to (8,0);
					\draw (1,2) node[above]{\scriptsize $1$};
			\draw (11,2) node[above]{\scriptsize $r$};
			\draw (12,2) node[above]{\scriptsize $r+1$};
			\draw (14,2) node[above]{\scriptsize $r+s$};
			\draw (4,2) node[above]{\scriptsize $a$};
			\draw (8,2) node[above]{\scriptsize $b$};
				\draw[gray, thick, dashed] (11.5,2.25) to (11.5, -0.25);
\end{tikzpicture}}}\,\,\,\text{ if } 1\leq a<b\leq r, &&\smallbreak\smallbreak\smallbreak\smallbreak\\
\,\,\vcenter{\hbox{\begin{tikzpicture}[scale=0.75]
			\draw[gray, very thick] (0,0) rectangle (15,2);
			\foreach \x in {1,3,4,6,7,8,10,11,12,14} \filldraw (\x,0) circle (2pt);
			\foreach \x in {1,3,4,6,7,8,10,11,12,14} \filldraw (\x,2) circle (2pt);
			\draw (2,1) node{$\cdots$};
			\draw (5,1) node{$\cdots$};
			\draw (9,1) node{$\cdots$};
			\draw (13,1) node{$\cdots$};
			\foreach \x in {1,3,4,6,8,10,12,14} \draw (\x,0) to (\x,2);
			\draw (7,0) to (11,2);
			\draw (7,2) to (11,0);
			\draw (1,2) node[above]{\scriptsize $1$};
			\draw (3,2) node[above]{\scriptsize $r$};
			\draw (4,2) node[above]{\scriptsize $r+1$};
			\draw (14,2) node[above]{\scriptsize $r+s$};
			\draw (7,2) node[above]{\scriptsize $a$};
			\draw (11,2) node[above]{\scriptsize $b$};
			\draw[gray, thick, dashed] (3.5,2.25) to (3.5, -0.25);
\end{tikzpicture}}}\,\,\, \text{ if } r+1\leq a<b\leq r+s, &&
\end{array}$$
Define also $e_{j,k},\, (1\leq j \leq r,\, r+1\leq k\leq r+s)$ to be the diagram 

$$\vcenter{\hbox{
		\begin{tikzpicture}[scale=0.75]
			\draw[gray, very thick] (0,0) rectangle (15,2);
			\foreach \x in {1,3,4,5,7,8,10,11,12,14} \filldraw (\x,0) circle (2pt);
			\foreach \x in {1,3,4,5,7,8,10,11,12,14} \filldraw (\x,2) circle (2pt);
			\draw (2,1) node{$\cdots$};
			\draw (6,1) node{$\cdots$};
			\draw (9,1) node{$\cdots$};
			\draw (13,1) node{$\cdots$};
			\foreach \x in {1,3,5,7,8,10,12,14} \draw (\x,0) to (\x,2);
			\draw (1,2) node[above]{\scriptsize $1$};
			\draw (7,2) node[above]{\scriptsize $r$};
			\draw (8,2) node[above]{\scriptsize $r+1$};
			\draw (14,2) node[above]{\scriptsize $r+s$};
			\draw (4,2) node[above]{\scriptsize $j$};
			\draw (11,2) node[above]{\scriptsize $k$};
			\draw[gray, thick, dashed] (7.5,2.25) to (7.5, -0.25);
			\draw (4,0) to [bend left=20] (11,0);
			\draw (4,2) to [bend right=20] (11,2);
\end{tikzpicture}}}$$

For each $1\leq k\leq r+s$ we define the \emph{Jucys-Murphy elements}  $L_k$ of $B_{r,s}(\delta)$ as follows:
$$L_k:=\begin{cases}
	0 &\text { if } k=1,\\
	\sum\limits_{j=1}^{k-1}(j,k) & \text{ if } 1<k\leq r,\\
		-\sum\limits_{j=1}^{r}e_{j,k}+\sum\limits_{j=r+1}^{k-1}(j,k)+\delta & \text{ if } r+1\leq k\leq r+s.
\end{cases}$$

\subsection{Supersymmetric polynomials and central elements} 

Let $m,n$ be nonnegative integers. An element $p$ in the polynomial ring $\mathbb{C}[x_1,\dots, x_m, y_1,\dots,y_n]$ is supersymmetric if
\begin{enumerate}
	\item $p$ is symmetric in $x_1,\dots, x_m$ and in $y_1,\dots, y_n$ separately.
	\item The substitution $x_m=t$, $y_1=-t$ yields a polynomial in $x_1,\dots, x_{m-1}, y_2,\dots, y_m$ which is independent of $t$.
\end{enumerate}
		We denote by $\mathcal{S}_{m,n}[x;y]$ the set of supersymmetric polynomials in $x_1,\dots, x_m, y_1,\dots,y_n$.
		
		The following result is Corollary 2.9 in \cite{JK} and it provides us with some central elements of $B_{r,s}(\delta)$.
		\begin{prop}\label{pjm}For every supersymmetric polynomial $p$ in $\mathcal{S}_{r,s}[x;y]$, the element $$p(L_1,\dots, L_{r}, L_{r+1},\dots, L_{r+s})$$ belongs to the centre of $B_{r,s}(\delta)$.
		\end{prop}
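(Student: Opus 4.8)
The plan is to reduce the statement to the case of the supersymmetric power sums, and then to verify that these are central by a direct computation in the diagram algebra. It is a standard fact that the $\mathbb{C}$-algebra $\mathcal{S}_{m,n}[x;y]$ is generated by the \emph{supersymmetric power sums}
\[
p_k(x;y)=\sum_{i=1}^{m}x_i^{k}+(-1)^{k-1}\sum_{j=1}^{n}y_j^{k}\qquad(k\ge 1),
\]
each of which satisfies conditions (1)--(2): setting $x_m=t$, $y_1=-t$ contributes $t^{k}+(-1)^{k-1}(-t)^{k}=0$. Since the centre of $B_{r,s}(\delta)$ is a subalgebra, it suffices to prove that for every $k\ge 1$ the element
\[
z_k:=p_k(L_1,\dots,L_r,L_{r+1},\dots,L_{r+s})=\sum_{i=1}^{r}L_i^{k}+(-1)^{k-1}\sum_{i=r+1}^{r+s}L_i^{k}
\]
is central; then for every $p\in\mathcal{S}_{r,s}[x;y]$ the element $p(L_1,\dots,L_{r+s})$ is a polynomial in the central elements $z_1,z_2,\dots$, hence central. (If $r=0$ or $s=0$ there is nothing new: the $L_k$ are the usual Jucys--Murphy elements of $\mathbb{C}S_n$ up to an additive scalar, supersymmetry reduces to ordinary symmetry, and the statement is classical. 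So assume $r,s\ge 1$.)

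To prove $z_k\in Z(B_{r,s}(\delta))$ I would first establish the following relations, all coming from short computations with walled Brauer diagrams. \textbf{(a)} $L_1,\dots,L_{r+s}$ commute pairwise. \textbf{(b)} $e=e_{r,r+1}$ commutes with $L_j$ for every $j\notin\{r,r+1\}$. \textbf{(c)} $e(L_r+L_{r+1})=0$, and hence $(L_r+L_{r+1})e=0$. \textbf{(d)} For each generator $s_i$ one has $s_iL_j=L_js_i$ for $j\notin\{i,i+1\}$, and $s_iL_is_i=L_{i+1}-s_i$, $s_iL_{i+1}s_i=L_i+s_i$; in particular $s_i$ commutes with $L_i+L_{i+1}$ and with $L_iL_{i+1}$. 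The diagrammatic inputs for (b) and (c) are the single-diagram identities
\[
e_{r,r+1}(j,r)=e_{r,r+1}e_{j,r+1}\quad(1\le j\le r-1),\qquad e_{r,r+1}e_{r,k}=e_{r,r+1}(r+1,k)\quad(r+2\le k\le r+s)
\]
together with $e_{r,r+1}^{2}=\delta\,e_{r,r+1}$ and the fact that diagrams supported on disjoint sets of strands commute. For (c): using $L_r=\sum_{j=1}^{r-1}(j,r)$, $L_{r+1}=-\sum_{j=1}^{r}e_{j,r+1}+\delta$ and the first identity gives $e(L_r+L_{r+1})=\sum_{j=1}^{r-1}e_{r,r+1}e_{j,r+1}-\sum_{j=1}^{r}e_{r,r+1}e_{j,r+1}+\delta e=-e_{r,r+1}^{2}+\delta e=0$, and $(L_r+L_{r+1})e=0$ follows by applying the anti-automorphism $*$, using $L_r^{*}=L_r$, $L_{r+1}^{*}=L_{r+1}$, $e^{*}=e$. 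For (b): if $j\le r-1$ then $L_j$ is a sum of transpositions on positions $<r$, disjoint from the two strands of $e$, so $eL_j=L_je$; if $j\ge r+2$ then, in $L_j=-\sum_{a=1}^{r}e_{a,j}+\sum_{a=r+1}^{j-1}(a,j)+\delta$, each summand not touching positions $r$ or $r+1$ commutes with $e$, while the two summands $-e_{r,j}$ and $(r+1,j)$ together contribute $0$ both to $eL_j$ and to $L_je$ by the second identity and its flip, so again $eL_j=L_je$. Finally (a) follows from (b): for $j<k$ one checks that $L_k$ commutes with each generator $s_1,\dots,s_{r-1},e_{r,r+1},s_{r+1},\dots,s_{k-2}$ of the subalgebra $B_{r,k-1-r}(\delta)$ sitting on the first $k-1$ strands --- commuting with each $s_i$ only relabels the indices in the defining sum for $L_k$, and commuting with $e_{r,r+1}$ is (b) --- so $L_k$ centralises $B_{r,k-1-r}(\delta)$, which contains $L_1,\dots,L_{k-1}$; when $k\le r$ this is the classical statement for $\mathbb{C}S_r$.

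Granting (a)--(d), the rest is formal. From $eL_r=-eL_{r+1}$ (which is (c)) and $L_rL_{r+1}=L_{r+1}L_r$ an easy induction gives $eL_r^{k}=(-1)^{k}eL_{r+1}^{k}$, hence $e\bigl(L_r^{k}+(-1)^{k-1}L_{r+1}^{k}\bigr)=0$ and, applying $*$, $\bigl(L_r^{k}+(-1)^{k-1}L_{r+1}^{k}\bigr)e=0$; since by (b) $e$ commutes with all the other summands of $z_k$, we get $ez_k=z_ke$. For a generator $s_i$ with $1\le i\le r-1$, both $L_i^{k}$ and $L_{i+1}^{k}$ occur in $z_k$ with coefficient $+1$, and $s_i$ commutes with $L_i^{k}+L_{i+1}^{k}$ because, by (d), it commutes with $L_i+L_{i+1}$ and $L_iL_{i+1}$ and hence with every symmetric polynomial in $L_i$ and $L_{i+1}$; by (d) again it commutes with every remaining $L_j^{k}$. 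The case $r+1\le i\le r+s-1$ is identical with coefficient $(-1)^{k-1}$. So $z_k$ commutes with $e$ and with every $s_i$, and since these generate $B_{r,s}(\delta)$ we conclude $z_k\in Z(B_{r,s}(\delta))$, which proves the proposition.

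The main obstacle is the diagrammatic part: verifying the single-diagram identities $e_{r,r+1}(j,r)=e_{r,r+1}e_{j,r+1}$ and $e_{r,r+1}e_{r,k}=e_{r,r+1}(r+1,k)$ (and the cancellations they produce in (b) and (c)), and carrying out the bookkeeping that reduces the pairwise commutativity (a) to (b); everything else is routine algebra. All of this is essentially the content of Proposition~2.8 and Corollary~2.9 of \cite{JK}, from which the statement is taken.
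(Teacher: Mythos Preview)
The paper does not actually prove this proposition: it simply quotes it as Corollary~2.9 of \cite{JK}. Your proposal supplies a full, correct, self-contained argument---reducing to the supersymmetric power sums, establishing the commutation relations (a)--(d) among the Jucys--Murphy elements and the generators $s_i,e$, and deducing that each $z_k$ is central---which is essentially the proof given in \cite{JK} (as you note in your final sentence). The diagrammatic identities you use are exactly right, and the inductive step $eL_r^{k}=(-1)^{k}eL_{r+1}^{k}$ from $eL_r=-eL_{r+1}$ together with $L_rL_{r+1}=L_{r+1}L_r$ is the key cancellation. There is nothing to compare on the paper's side; your write-up could serve as the omitted proof.
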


\subsection{The semisimple case}
		We know that when an algebra is semisimple, the dimension of its centre is equal to the number of isomorphic classes of simple modules. For the walled Brauer algebra $B_{r,s}(\delta)$, these are indexed by the set $\Lambda_{r,s}$ of bipartitions defined as follows (see for example \cite[Theorem 2.7]{CDDM}).

		A partition is a decreasing sequence of non-negative integers $\lambda=(\lambda_1, \lambda_2,\dots)$. We write $\Lambda$ for the set of all partitions. For $\lambda \in \Lambda$ we set $|\lambda| := \sum\limits_{i\geq 1}\lambda_i$.   Now define
		$$\Lambda_{r,s}:=\coprod\limits_{k=0}^{\min(r,s)}\{(\lambda, \mu)\in \Lambda \times \Lambda \,|\, |\lambda|=r-k, \,|\mu|=s-k\}$$

		Therefore, when $B_{r,s}(\delta)$ is semisimple, the dimension of its centre is the cardinality of $\Lambda_{r,s}$. Jung and Kim \cite[Theorem 3.5]{JK} gave a basis of the centre of $B_{r,s}(\delta)$ in this case. More precisely, they proved the following theorem.
		\begin{thm}
			If the walled Brauer algebra $B_{r,s}(\delta)$ is semisimple, the supersymmetric polynomials in $L_1,\dots, L_{r+s}$ generate the centre of $B_{r,s}(\delta)$. Moreover, there is a set of supersymmetric polynomials $\{p_{(\lambda, \mu)}\in S_{r,s}[x;y]\,|\, (\lambda,\mu) \in \Lambda_{r,s}\}$, such that  the set $\{p_{(\lambda, \mu)}(L_1,\dots, L_{r+s})\,|\, (\lambda, \mu) \in \Lambda_{r,s}\}$ is a basis of the centre of $B_{r,s}(\delta)$.
		\end{thm}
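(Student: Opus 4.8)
The plan is to describe the centre via the Wedderburn decomposition and then show that the elements $p(L_1,\dots,L_{r+s})$, $p\in\mathcal S_{r,s}[x;y]$, already exhaust it, by computing the tuple of scalars by which each of them acts on the simple modules. Since $B_{r,s}(\delta)$ is semisimple we have $Z(B_{r,s}(\delta))=\bigoplus_{(\lambda,\mu)\in\Lambda_{r,s}}\mathbb{C}\,\mathbf 1_{\Delta(\lambda,\mu)}$, and a central element $z$ is recorded by the scalars $\chi_{(\lambda,\mu)}(z)$ by which it acts on $\Delta(\lambda,\mu)$. By Proposition~\ref{pjm} the span $\mathcal Z$ of $\{p(L_1,\dots,L_{r+s})\mid p\in\mathcal S_{r,s}[x;y]\}$ lies in $Z(B_{r,s}(\delta))$ and equals the image of the linear map $\Psi\colon \mathcal S_{r,s}[x;y]\to\mathbb{C}^{\Lambda_{r,s}}$, $p\mapsto(\chi_{(\lambda,\mu)}(p(L_1,\dots,L_{r+s})))_{(\lambda,\mu)}$. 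So it suffices to prove that $\Psi$ is surjective; the family $\{p_{(\lambda,\mu)}\}$ in the ``moreover'' is then obtained by pulling back a dual basis of the coordinate functionals.

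To carry this out I would use the spectrum of the Jucys--Murphy elements on the seminormal basis. The $L_k$ commute, and in the semisimple case $\Delta(\lambda,\mu)$ contains, for every path $\mathfrak t$ ending at $(\lambda,\mu)$ in the branching graph of the tower $B_{0,0}\subset\cdots\subset B_{r,0}\subset B_{r,1}\subset\cdots\subset B_{r,s}$, a joint eigenvector $v_{\mathfrak t}$ on which $L_k$ acts by the content of the box added at step $k$ if $k\le r$, by $-c(\square)$ if step $k$ removes a box $\square$ from the left-hand component, and by $\delta+c(\square)$ if step $k$ adds a box $\square$ to the right-hand component; this is the walled-Brauer Okounkov--Vershik picture, which I would quote from \cite{JK,RS}. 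As $p(L_1,\dots,L_{r+s})$ is central, it acts on $\Delta(\lambda,\mu)$ by the scalar $p$ evaluated at the content vector of any single such path. Now $\mathcal S_{r,s}[x;y]$ is generated as a $\mathbb{C}$-algebra by the $r+s$ supersymmetric power sums $p_d=\sum_{i=1}^r x_i^d+(-1)^{d+1}\sum_{j=1}^s y_j^d$, $1\le d\le r+s$, so $\Psi$ is determined by the $\Psi(p_d)$. The first $r$ steps of a path to $(\lambda,\mu)$ build a partition $\nu\supseteq\lambda$ with $|\nu|=r$, and its last $s$ steps add the boxes of $\mu$ and delete the $k:=|\nu|-|\lambda|$ boxes of $\nu/\lambda$; evaluating $p_d$ on the resulting content vector, every box $\square\in\nu/\lambda$ contributes $c(\square)^d$ on the $x$-side and $(-1)^{d+1}(-c(\square))^d=-c(\square)^d$ on the $y$-side, so these terms cancel --- this is exactly where the supersymmetry condition does its work --- leaving
\[
\chi_{(\lambda,\mu)}\bigl(p_d(L_1,\dots,L_{r+s})\bigr)=\theta_d(\lambda,\mu):=\sum_{\square\in\lambda}c(\square)^d+(-1)^{d+1}\sum_{\square\in\mu}\bigl(\delta+c(\square)\bigr)^d .
\]
Hence $\Psi(F(p_1,\dots,p_{r+s}))=\bigl(F(\theta_1(\lambda,\mu),\dots,\theta_{r+s}(\lambda,\mu))\bigr)_{(\lambda,\mu)}$ for every polynomial $F$, and since each $p_d$ with $d>r+s$ is a polynomial in $p_1,\dots,p_{r+s}$, Lagrange interpolation shows that $\Psi$ is surjective precisely when the vectors $\Theta(\lambda,\mu):=(\theta_1(\lambda,\mu),\dots,\theta_{r+s}(\lambda,\mu))\in\mathbb{C}^{r+s}$ are pairwise distinct as $(\lambda,\mu)$ runs over $\Lambda_{r,s}$.

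The remaining combinatorial task is this separation statement, and I expect it (together with the input on the Jucys--Murphy spectrum) to be the main obstacle. Suppose $\Theta(\lambda,\mu)=\Theta(\lambda',\mu')$; bootstrapping through the higher power sums gives $\theta_d(\lambda,\mu)=\theta_d(\lambda',\mu')$ for all $d\ge1$, and rewriting $\theta_d$ as a difference of genuine power sums and invoking Newton's identities turns this into the multiset identity
\[
\operatorname{cont}(\lambda)\sqcup\{-\delta-c\mid c\in\operatorname{cont}(\mu')\}=\operatorname{cont}(\lambda')\sqcup\{-\delta-c\mid c\in\operatorname{cont}(\mu)\},
\]
where $\operatorname{cont}(\cdot)$ is the multiset of box contents. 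Here the semisimplicity hypothesis of Theorem~\ref{ss} enters: outside the four exceptional cases with $\delta=0$ one has $\delta\notin\mathbb{Z}$ or $|\delta|>r+s-2$, and in both cases the values occurring in $\operatorname{cont}(\lambda),\operatorname{cont}(\lambda')$ --- integers of absolute value at most $r-1$ --- are disjoint from those in the $-\delta-c$ multisets (for non-integral $\delta$ trivially, and for $|\delta|>r+s-2$ because a shared value $v$ would force $|\delta|\le|v|+|v+\delta|\le(r-1)+(s-1)$). Splitting the multiset identity along this dichotomy gives $\operatorname{cont}(\lambda)=\operatorname{cont}(\lambda')$ and $\operatorname{cont}(\mu)=\operatorname{cont}(\mu')$, and since a partition is recovered from its multiset of box contents (equivalently, from the number of boxes on each diagonal), this forces $(\lambda,\mu)=(\lambda',\mu')$. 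The four exceptional semisimple cases, where the two multisets can genuinely overlap, I would dispose of by writing out the finitely many vectors $\Theta$ explicitly. This yields the surjectivity of $\Psi$, hence $\mathcal Z=Z(B_{r,s}(\delta))$, and completes the argument.

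The hardest ingredient is the explicit Jucys--Murphy spectrum in the second step: it packages a good deal of the seminormal-representation theory of $B_{r,s}(\delta)$, and it is precisely the $\pm c(\square)$ symmetry of that spectrum --- through-boxes versus boxes that get removed --- which makes the cancellation in the computation of $\theta_d$ automatic and explains why \emph{supersymmetric} rather than merely symmetric polynomials are the relevant objects. The other inputs --- generation of $\mathcal S_{r,s}[x;y]$ by $p_1,\dots,p_{r+s}$, the elementary estimates on $\delta$ coming from Theorem~\ref{ss}, and the classical fact that a partition is determined by its contents --- are routine.
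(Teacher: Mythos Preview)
The paper does not give its own proof of this theorem; it is quoted verbatim from \cite[Theorem~3.5]{JK} and used as a black box. So there is no ``paper's proof'' to compare against---your proposal is effectively a reconstruction of the argument in \cite{JK}.

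Your outline is sound and is indeed the natural route: use the Wedderburn decomposition to identify the centre with $\mathbb{C}^{\Lambda_{r,s}}$, compute the central characters of the supersymmetric power sums via the Jucys--Murphy spectrum on the seminormal basis, observe the path-independence coming from the supersymmetric cancellation, and then separate bipartitions by their content data using the numerical constraints on $\delta$ from Theorem~\ref{ss}. The multiset argument and the bound $|\delta|\le (r-1)+(s-1)$ are correct, and the reduction of the small $\delta=0$ cases to a finite check is legitimate.

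Two minor points. First, your assertion that $\mathcal S_{r,s}[x;y]$ is \emph{generated} as an algebra by $p_1,\dots,p_{r+s}$ is neither obvious nor needed: for surjectivity of $\Psi$ you only use that every polynomial $F(p_1,\dots,p_{r+s})$ is supersymmetric and that such polynomials separate the points $\Theta(\lambda,\mu)$, which is exactly what your interpolation step delivers. Second, the heavy lifting really is the Jucys--Murphy spectrum you import from \cite{JK,RS}; once that is granted, the rest is routine, so your assessment of where the difficulty lies is accurate.
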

	One can find a precise description of these supersymmmetric polynomials $p_{\lambda}$ in \cite[Section 3]{JK}.
	
	The goal of this paper to extend this result to the non-semisimple case. More precisely, we want to prove the following conjecture \cite[Conjecture 5.4]{JK}:
	
	\begin{conj}\label{mainconjecture}
		For every $\delta \in \mathbb{C}$, the center of the walled Brauer algebra $B_{r,s}(\delta)$ is generated by the supersymmetric polynomials in the Jucys-Murphy elements $L_1, \dots, L_{r+s}$.
	\end{conj}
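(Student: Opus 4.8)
The plan is to prove Conjecture~\ref{mainconjecture} in the case $s=1$, for arbitrary $r$ -- which is the best one can currently do -- and along the way to show that $\dim_{\mathbb{C}}Z(B_{r,1}(\delta))$ is independent of $\delta$. The strategy, which in principle applies for all $s$, combines three ingredients: a uniform lower bound on $\dim_{\mathbb{C}}Z(B_{r,s}(\delta))$ obtained by base change over $\mathbb{C}[\delta]$; a reduction of centrality to a single commutation relation using Shalile's cycle-type basis of $C_{B_{r,s}(\delta)}(S_r\times S_s)$; and an explicit solution of the resulting linear system when $s=1$.

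First, I would work over the polynomial ring $\mathbb{C}[\delta]$, where the walled Brauer algebra $\mathcal{B}$ is free of rank $(r+s)!$ on the diagram basis and its centre is the kernel of the $\mathbb{C}[\delta]$-linear map $x\mapsto\bigl((xs_i-s_ix)_i,\,xe-ex\bigr)$. Since $\mathbb{C}[\delta]$ is a principal ideal domain, this kernel is free and is a direct summand of $\mathcal{B}$; consequently $Z(\mathcal{B})\otimes_{\mathbb{C}[\delta]}\mathbb{C}$ embeds into $B_{r,s}(\delta_0)$ with image contained in $Z(B_{r,s}(\delta_0))$, so that $\dim_{\mathbb{C}}Z(B_{r,s}(\delta_0))\ge\rank_{\mathbb{C}[\delta]}Z(\mathcal{B})$ for every $\delta_0\in\mathbb{C}$. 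On the other hand, $\mathcal{B}\otimes\mathbb{C}(\delta)$ is semisimple (the non-semisimple complex specialisations form a finite set by Theorem~\ref{ss}, so the cell-module Gram determinants do not vanish identically), and hence $\rank_{\mathbb{C}[\delta]}Z(\mathcal{B})=\dim_{\mathbb{C}(\delta)}Z(\mathcal{B}\otimes\mathbb{C}(\delta))=|\Lambda_{r,s}|$ by \cite[Theorem~3.5]{JK}. This is the lower bound of Section~3; note also that the central elements $p_{(\lambda,\mu)}(L_1,\dots,L_{r+s})$ of Proposition~\ref{pjm} lie in $Z(\mathcal{B})$, since Proposition~\ref{pjm} is valid for all $\delta$.

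Next, since $B_{r,s}(\delta)$ is generated by $\mathbb{C}(S_r\times S_s)$ together with $e$, an element is central exactly when it commutes with $\mathbb{C}(S_r\times S_s)$ and with $e$. By the construction of Shalile \cite{Sha} recalled in Section~4, the sums of all diagrams sharing a fixed generalised walled cycle type form a $\mathbb{C}$-basis of $C_{B_{r,s}(\delta)}(S_r\times S_s)$, and this basis is independent of $\delta$. Thus $Z(B_{r,s}(\delta))$ is cut out of an explicit finite-dimensional space by the single linear condition $[x,e]=0$. For $s=1$ the cycle types are easy to enumerate: every diagram of $B_{r,1}(\delta)$ either lies in $S_r$ or carries exactly one northern and one southern arc, each meeting the rightmost vertex, so a basis of $C_{B_{r,1}(\delta)}(S_r)$ is given by the conjugacy-class sums of $\mathbb{C}S_r$ (one per partition of $r$) together with the ``arc'' class sums (one per partition of $r$ with a distinguished part). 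I would then compute the products $ec$ and $ce$ of $e=e_{r,r+1}$ with each such basis vector, re-expand in the same basis, and solve the homogeneous system $[x,e]=0$. The assertion is that its solution space has dimension exactly $|\Lambda_{r,1}|$ (the number of partitions of $r$ plus the number of partitions of $r-1$): the bound $\ge$ is the lower bound above, and $\le$ is the content of the computation -- in particular one must exhibit a minor of the system matrix that is a nonzero constant, so that the rank does not drop at special $\delta$. Finally I would write down the Jung--Kim supersymmetric polynomials explicitly for $\Lambda_{r,1}$ (power sums in $\mathcal{S}_{r,1}[x;y]$ evaluated at $L_1,\dots,L_{r+1}$), check that their coordinate vectors in Shalile's basis solve the system and are $\mathbb{C}$-linearly independent for every $\delta$, and conclude by the dimension count that they span $Z(B_{r,1}(\delta))$.

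The main obstacle is the explicit computation at the end: multiplying Shalile's cycle-type class sums by $e$ uniformly in $r$ -- tracking carefully how the distinguished-part data of the arc types transforms -- and then showing that the commutation equations are independent enough to force the solution space down to dimension $|\Lambda_{r,1}|$ for \emph{every} $\delta$, rather than merely bounding it by something larger or only for generic $\delta$. A secondary subtlety is that one wants the centre to be \emph{spanned} by supersymmetric polynomials in the $L_k$, not just to have the right dimension, so the solutions of the system must be matched with those polynomials explicitly. For general $s$ the same plan applies, but the linear system governing commutation with $e$ becomes combinatorially unwieldy, which is why the full Conjecture~\ref{mainconjecture} remains open.
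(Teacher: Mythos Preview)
Your proposal is correct and follows the same three-step strategy as the paper: a lower bound on $\dim Z(B_{r,s}(\delta))$ by base change over $\mathbb{C}[z]$, reduction to a single commutation relation with $e$ via Shalile's cycle-type basis, and an explicit analysis of the resulting linear system when $s=1$. The differences are in execution rather than architecture.

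First, your final ``secondary subtlety'' is unnecessary. The paper's version of the lower bound (Propositions~3.2 and~3.3) shows directly that the elements $p_{(\lambda,\mu)}(\mathcal{L}_1,\dots,\mathcal{L}_{r+s})$ form a $\mathbb{C}[z]$-basis of $Z(B_{r,s}^{\mathbb{C}[z]}(z))$ and remain linearly independent under \emph{every} specialisation $z\mapsto\delta$. Thus once the upper bound $\dim Z(B_{r,1}(\delta))\le|\Lambda_{r,1}|$ is established, the supersymmetric polynomials in the $L_k$ automatically span the centre; no separate matching is required. Your direct-summand argument gives the dimension bound but not immediately the linear independence of these particular elements, which is why you felt the need for an extra step.

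Second, for the upper bound the paper avoids computing the full products $ec$ and $ce$ against every cycle-type class sum. Instead it selects, for each cycle type $\mu$ containing a non-trivial $NS$, a single diagram $x$ of that type (with northern arc $\{r,r+1\}$ and southern arc $\{i_x,r+1\}$, $i_x\neq r$) and studies only the equation $b_x=0$. The key observation is that among all $d$ with $ed=x$ there is a unique one, $\sigma_x\in S_r$, whose cycle type has no $NS$, and a unique one, $z_x$, whose cycle type has $NS$ only as a trivial part; moreover $c(\sigma_x)$ and $c(z_x)$ each determine $c(x)$. This forces the coefficient matrix, restricted to the columns indexed by cycle types with no $NS$ or with trivial $NS$, to contain a permutation submatrix---a nonzero constant minor of exactly the size you need, independently of $\delta$. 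This is precisely the ``minor that is a nonzero constant'' you anticipate, but obtained structurally rather than by brute computation.
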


\section{A lower bound on the dimension of the centre of $B_{r,s}(\delta)$}

Let $z$ be a formal parameter. We consider the walled Brauer algebra $B_{r,s}^{\mathbb{C}[z]}(z)$ over the polynomial ring $\mathbb{C}[z]$.
We define the Jucys-Murphy elements $\mathcal{L}_i$, $1\leq i\leq r+s$ as in Section \ref{jm}, replacing $\delta$ by $z$. The proof of Proposition \ref{pjm} goes through unchanged when working in this setting and we get the following proposition.

\begin{prop}\label{pjm2}
	All supersymmetric polynomials (with coefficients in $\mathbb{C}[z]$) in $\mathcal{L}_1,\dots, \mathcal{L}_{r+s}$ are central in $B_{r,s}^{\mathbb{C}[z]}(z)$.
\end{prop}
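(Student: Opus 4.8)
The plan is to reduce Proposition~\ref{pjm2} to Proposition~\ref{pjm} by a specialisation argument, exploiting the fact that the walled Brauer algebra $B_{r,s}^{\mathbb{C}[z]}(z)$ is a ``generic'' version of $B_{r,s}(\delta)$ that recovers each complex specialisation. Concretely, for a supersymmetric polynomial $p$ with coefficients in $\mathbb{C}[z]$ evaluated at the $\mathcal{L}_i$, write
\[
p(\mathcal{L}_1,\dots,\mathcal{L}_{r+s})=\sum_{d}f_d(z)\,d,
\]
where the sum runs over the $(r,s)$-walled Brauer diagrams and each $f_d(z)\in\mathbb{C}[z]$. For any generator $g\in\{s_i,e\}$ one similarly expands the commutator $[g,p(\mathcal{L}_1,\dots,\mathcal{L}_{r+s})]=\sum_d h_d(z)\,d$ with $h_d(z)\in\mathbb{C}[z]$. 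The goal is to show every $h_d$ is the zero polynomial, which (since $\{s_i,e\}$ generate $B_{r,s}^{\mathbb{C}[z]}(z)$) gives centrality.

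The key step is the observation that specialising $z\mapsto\delta_0$ for any fixed $\delta_0\in\mathbb{C}$ is an algebra homomorphism $\pi_{\delta_0}\colon B_{r,s}^{\mathbb{C}[z]}(z)\to B_{r,s}(\delta_0)$, compatible with the diagram basis (it sends the diagram $d$ to $d$) and sending $\mathcal{L}_i$ to the Jucys--Murphy element $L_i$ of $B_{r,s}(\delta_0)$, because the defining formulas for the $\mathcal{L}_i$ are obtained from those for $L_i$ by the substitution $\delta\mapsto z$. Moreover, if $p\in\mathcal{S}_{r,s}[x;y]$ has coefficients in $\mathbb{C}[z]$, then $\pi_{\delta_0}(p)$ is the supersymmetric polynomial over $\mathbb{C}$ obtained by specialising the coefficients at $\delta_0$ -- supersymmetry is a set of linear conditions on the coefficients and is preserved under $z\mapsto\delta_0$. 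Hence, applying $\pi_{\delta_0}$ to $[g,p(\mathcal{L}_1,\dots,\mathcal{L}_{r+s})]$ and using Proposition~\ref{pjm} (which gives centrality of supersymmetric polynomials in the Jucys--Murphy elements of $B_{r,s}(\delta_0)$), we obtain $\sum_d h_d(\delta_0)\,d=0$ in $B_{r,s}(\delta_0)$. Since the diagrams form a basis of $B_{r,s}(\delta_0)$, this forces $h_d(\delta_0)=0$ for every diagram $d$. As $\delta_0\in\mathbb{C}$ was arbitrary, each polynomial $h_d$ vanishes at every point of $\mathbb{C}$, hence $h_d=0$ identically. Therefore $[g,p(\mathcal{L}_1,\dots,\mathcal{L}_{r+s})]=0$ for all generators $g$, so $p(\mathcal{L}_1,\dots,\mathcal{L}_{r+s})$ is central.

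I do not expect a serious obstacle here; this is the promised ``the proof goes through unchanged'' situation, and indeed the cleanest way to formalise it is the specialisation argument above, which avoids re-running Jung--Kim's diagram computations over $\mathbb{C}[z]$. The only points requiring a line of care are: (i) checking that each coefficient $f_d(z)$ is genuinely a polynomial in $z$ (clear, since the $\mathcal{L}_i$ are $\mathbb{C}[z]$-linear combinations of diagrams and products of diagrams are $\mathbb{C}[z]$-linear combinations of diagrams with coefficients a power of $z$); (ii) verifying that $\pi_{\delta_0}$ really is an algebra map on the nose, which is immediate from the definition of multiplication of diagrams (the exponent $n$ of loops removed does not depend on the parameter); and (iii) noting that ``supersymmetric with $\mathbb{C}[z]$-coefficients'' specialises to ``supersymmetric with $\mathbb{C}$-coefficients'', which holds because conditions (1) and (2) in the definition of supersymmetric polynomials are linear in the coefficients. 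Alternatively, one could simply remark that the entire proof of Corollary~2.9 of \cite{JK} is a formal manipulation valid over any commutative ring containing the parameter, and in particular over $\mathbb{C}[z]$ with parameter $z$; I would state the specialisation argument as the main proof and leave this remark as an aside.
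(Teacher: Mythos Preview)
Your argument is correct. The paper itself does not give a proof: it simply asserts that the proof of Proposition~\ref{pjm} (i.e.\ Jung--Kim's Corollary~2.9) goes through unchanged over $\mathbb{C}[z]$, and states the proposition. Your specialisation argument is a genuinely different route: rather than re-running Jung--Kim's diagrammatic computations over the new base ring, you use Proposition~\ref{pjm} as a black box and deduce the $\mathbb{C}[z]$-statement by evaluating at every $\delta_0\in\mathbb{C}$ and invoking the fact that a polynomial vanishing at all complex points is zero. This buys you a self-contained proof that does not require the reader to revisit \cite{JK}, at the cost of the mild bookkeeping you already flag in points (i)--(iii); the paper's one-line approach is terser but asks the reader to trust that nothing in Jung--Kim's argument is sensitive to the base ring. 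You even note this alternative at the end of your write-up, so the two approaches are reconciled.
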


We now take a $\delta_0\in \mathbb{C}$ ($\delta_0\not=0$), such that $B_{r,s}(\delta_0)$ is semisimple (see Theorem \ref{ss}). Set $m=|\Lambda_{r,s}|$ and choose $p_i\in S_{r,s}[x;y]$ for $1\leq i\leq m$ to be supersymmetric polynomials such that 
$$\{p_i(L_1,\dots, L_{r+s})\,|\, 1\leq i\leq m\}$$
form a $\mathbb{C}$-basis for the centre $Z(B_{r,s}(\delta_0))$ of $B_{r,s}(\delta_0)$.

Let $\mathbb{C}(z)$ be the field of fractions of $\mathbb{C}[z]$ and we consider the algebra
$$B_{r,s}^{\mathbb{C}(z)}(z):=B_{r,s}^{\mathbb{C}[z]}(z)\otimes_{\mathbb{C}[z]}\mathbb{C}(z).$$

Using the same arguments of Theorem 6.3 in \cite{CDDM}, we have that $B_{r,s}^{\mathbb{C}(z)}(z)$ is a semisimple $\mathbb{C}(z)$-algebra with $m$ simple modules, hence
$$\dim_{\mathbb{C}(z)}Z(B_{r,s}^{\mathbb{C}(z)}(z))=m.$$
		Moreover, 
		$$Z(B_{r,s}^{\mathbb{C}(z)}(z))=Z(B_{r,s}^{\mathbb{C}[z]}(z))\otimes_{\mathbb{C}[z]}\mathbb{C}(z).$$
		Therefore, 
		$$\rank_{\mathbb{C}[z]}Z(B_{r,s}^{\mathbb{C}[z]}(z))=\dim_{\mathbb{C}(z)}Z(B_{r,s}^{\mathbb{C}(z)}(z))=m.$$
		
		\begin{prop}
			The set $\{p_i(\mathcal{L}_1,\dots, \mathcal{L}_{r+s})\,|\, 1\leq i\leq m\}$ forms a $\mathbb{C}[z]$-basis for $Z(B_{r,s}^{\mathbb{C}[z]}(z))$.
		\end{prop}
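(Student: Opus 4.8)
The plan is to exploit the three facts already assembled: the elements $p_i(\mathcal{L}_1,\dots,\mathcal{L}_{r+s})$ lie in $Z(B_{r,s}^{\mathbb{C}[z]}(z))$ by Proposition \ref{pjm2}; this centre is a free $\mathbb{C}[z]$-module of rank $m$; and under specialisation $z\mapsto\delta_0$ the images $p_i(L_1,\dots,L_{r+s})$ form a $\mathbb{C}$-basis of $Z(B_{r,s}(\delta_0))$. First I would observe that $\{p_i(\mathcal{L}_1,\dots,\mathcal{L}_{r+s})\}_{1\le i\le m}$ is a set of $m$ elements of a rank-$m$ free $\mathbb{C}[z]$-module, so it suffices to prove that this set is $\mathbb{C}[z]$-linearly independent: any $m$-element linearly independent subset of a free module of rank $m$ over an integral domain spans after tensoring with the fraction field, but here we want an honest $\mathbb{C}[z]$-basis, so a little more care is needed — see the last paragraph.

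For the linear independence, suppose $\sum_{i=1}^m f_i(z)\,p_i(\mathcal{L}_1,\dots,\mathcal{L}_{r+s})=0$ in $B_{r,s}^{\mathbb{C}[z]}(z)$ with $f_i(z)\in\mathbb{C}[z]$ not all zero. Dividing by the highest power of $z-\delta_0$ dividing all the $f_i$, we may assume that not every $f_i$ vanishes at $z=\delta_0$. Now apply the specialisation homomorphism $B_{r,s}^{\mathbb{C}[z]}(z)\to B_{r,s}(\delta_0)$ sending $z\mapsto\delta_0$; since the diagram basis and multiplication rule of $B_{r,s}^{\mathbb{C}[z]}(z)$ reduce to those of $B_{r,s}(\delta_0)$ under this substitution, and since the $\mathcal{L}_k$ reduce to the $L_k$ by construction, we obtain $\sum_{i=1}^m f_i(\delta_0)\,p_i(L_1,\dots,L_{r+s})=0$ in $B_{r,s}(\delta_0)$, a nontrivial relation among the $p_i(L_1,\dots,L_{r+s})$, contradicting that these form a basis of $Z(B_{r,s}(\delta_0))$. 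Hence the $p_i(\mathcal{L}_1,\dots,\mathcal{L}_{r+s})$ are $\mathbb{C}[z]$-linearly independent.

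It remains to upgrade linear independence to spanning over $\mathbb{C}[z]$, which is the one genuinely delicate point. Let $M=Z(B_{r,s}^{\mathbb{C}[z]}(z))$, a free $\mathbb{C}[z]$-module of rank $m$, and let $N\subseteq M$ be the submodule generated by the $p_i(\mathcal{L}_1,\dots,\mathcal{L}_{r+s})$; by the previous paragraph $N$ is free of rank $m$, so $M/N$ is a finitely generated torsion $\mathbb{C}[z]$-module, supported at finitely many points of $\operatorname{Spec}\mathbb{C}[z]$. I would argue that $M/N$ has no support at $z=\delta_0$: tensoring the inclusion $N\hookrightarrow M$ with $\mathbb{C}[z]/(z-\delta_0)=\mathbb{C}$, the images of the $p_i(\mathcal{L}_1,\dots,\mathcal{L}_{r+s})$ land in $Z(B_{r,s}(\delta_0))$ and already span it (they are a basis), so the composite $N\otimes\mathbb{C}\to M\otimes\mathbb{C}\to Z(B_{r,s}(\delta_0))$ is surjective onto an $m$-dimensional space, forcing $M\otimes\mathbb{C}\to Z(B_{r,s}(\delta_0))$ to be an isomorphism and $(M/N)\otimes\mathbb{C}=0$, i.e. $(z-\delta_0)(M/N)=M/N$. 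One then runs the same specialisation argument at \emph{every} $\delta\in\mathbb{C}$ for which $B_{r,s}(\delta)$ is semisimple — there are infinitely many such $\delta$ by Theorem \ref{ss} and the remark following it — obtaining $(z-\delta)(M/N)=M/N$ for infinitely many $\delta$; but a nonzero finitely generated torsion $\mathbb{C}[z]$-module is killed by a single nonzero polynomial and hence supported on finitely many points, so $M/N=0$ and $N=M$. The main obstacle is precisely this passage from a fraction-field basis to an integral $\mathbb{C}[z]$-basis: one cannot conclude from semisimplicity at $\delta_0$ alone, and must use that the specialised elements remain a basis at infinitely many semisimple values of $\delta$ to rule out torsion in $M/N$ at any closed point.
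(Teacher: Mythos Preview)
Your linear-independence argument is exactly the paper's: specialise at $\delta_0$, first clear the highest common power of $(z-\delta_0)$ from the coefficients, and then use that the $p_i(L_1,\dots,L_{r+s})$ are independent in $B_{r,s}(\delta_0)$. The paper in fact stops there, asserting that since $\rank_{\mathbb{C}[z]} Z(B_{r,s}^{\mathbb{C}[z]}(z))=m$ it ``is enough to show that this set is linearly independent''; you are right to notice that over a ring this does not immediately yield a basis, and your proof goes further than the paper's in attempting to address this.

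However, your spanning argument does not close the gap. Knowing $(z-\delta)(M/N)=M/N$ for infinitely many $\delta$ only says that those $\delta$ lie outside $\operatorname{Supp}(M/N)$. Since a nonzero finitely generated torsion $\mathbb{C}[z]$-module \emph{already} has finite support, this is no contradiction whatsoever: $M/N$ could perfectly well be supported at (some of) the finitely many non-semisimple parameter values, and your argument gives no information at those points. There is also a secondary slip: the polynomials $p_i$ were chosen so that their specialisations form a basis of the centre at the \emph{single} value $\delta_0$, so you cannot ``run the same specialisation argument'' at an arbitrary semisimple $\delta$ without first knowing that the $p_i(L_1,\dots,L_{r+s})$ remain a basis there --- this would require using the specific Jung--Kim polynomials $p_{(\lambda,\mu)}$ rather than an arbitrary choice. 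Even granting that, the essential problem remains: to conclude $M/N=0$ you must exclude support at \emph{every} closed point, including the non-semisimple ones.
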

	\begin{proof}
		From Proposition \ref{pjm2} we have that $p_i(\mathcal{L}_1,\dots, \mathcal{L}_{r+s})\in
		Z(B_{r,s}^{\mathbb{C}[z]}(z))$, for every $i=1,\dots,m$. Moreover,  $\rank_{\mathbb{C}[z]}Z(B_{r,s}^{\mathbb{C}[z]}(z))=m$. Therefore, it is enough to show that this set is linearly independent.
		
		Assume $\sum\limits_{i=1}^m a_i(z)p_i(\mathcal{L}_1,\dots, \mathcal{L}_{r+s})=0$, for some $a_i(z)\in \mathbb{C}[z]$, not all 0. 
			Dividing by the highest power of $(z-\delta_0)$ which divides all the $a_i(z)$'s, we can assume that there exists some $i_0$ with $a_{i_0}(\delta_0)\not=0$. 
		
		Now, specializing to $z=\delta_0$, and noting that under this specialization the elements $\mathcal{L}_i$'s become the $L_i$'s, we get
		$$\sum\limits_{i=1}^m a_i(\delta_0)p_i(L_1,\dots,L_{r+s})=0.$$
		Now, as $\{p_i(L_1,\dots, L_{r+s})\,|\, 1\leq i\leq m\}$ is a linearly independent set, we must have $a_i(\delta_0)=0,\, \forall i=1,\dots,m$, which contradicts the fact that $a_{i_0}(\delta_0)\not=0$. 
	\end{proof}
		
		We now take an arbitrary $\delta\in \mathbb{C}$ and we consider
		$$B^{\mathbb{C}}_{r,s}(\delta)=B_{r,s}^{\mathbb{C}[z]}(z)\otimes_{\mathbb{C}[z]}\mathbb{C}[z]/\langle  z-\delta\rangle.$$
		
		For any $Q\in B_{r,s}^{\mathbb{C}[z]}(z)$, we write 
		$$\bar{Q}:=Q\otimes_{\mathbb{C}[z]}\mathbb{C}[z]/\langle  z-\delta\rangle\in B^{\mathbb{C}}_{r,s}(\delta).$$
		
		To simplify notation we write
		$$\mathcal{P}_i:=p_i(\mathcal{L}_1,\dots, \mathcal{L}_{r+s})\in
		 B_{r,s}^{\mathbb{C}[z]}(z),\,\,i=1,\dots, m.$$
		 
		 \begin{prop}\label{gc} 
		 	For any choice of $\delta \in \mathbb{C}$, the set $\{\bar{\mathcal{P}_i}\,|\, 1\leq i\leq m\}$ is a linearly independent set in $Z(B_{r,s}^{\mathbb{C}}(z))$. In particular, $\dim_{\mathbb{C}}Z(B_{r,s}^{\mathbb{C}}(z))\geq |\Lambda_{r,s}|$.
		 \end{prop}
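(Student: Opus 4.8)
The plan is to descend the $\mathbb{C}[z]$-basis of the previous proposition to the quotient $\mathbb{C}[z]/\langle z-\delta\rangle$ and argue that linear independence cannot be lost. First I would record that $B_{r,s}^{\mathbb{C}[z]}(z)$ is a free $\mathbb{C}[z]$-module of rank $(r+s)!$ (its diagram basis is a $\mathbb{C}[z]$-basis), and that $Z(B_{r,s}^{\mathbb{C}[z]}(z))$ is a $\mathbb{C}[z]$-submodule which, by the previous proposition, is free of rank $m$ with basis $\{\mathcal{P}_i\}$. Since each $\mathcal{P}_i$ is central, its image $\bar{\mathcal{P}_i}$ under the base change $-\otimes_{\mathbb{C}[z]}\mathbb{C}[z]/\langle z-\delta\rangle$ is central in $B_{r,s}^{\mathbb{C}}(\delta)$; so the whole issue is linear independence over $\mathbb{C}$ of the $\bar{\mathcal{P}_i}$.

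The key step is the following base-change observation. Expand each $\mathcal{P}_i$ in the diagram basis: $\mathcal{P}_i=\sum_d c_{i,d}(z)\, d$ with $c_{i,d}(z)\in\mathbb{C}[z]$, and let $C(z)$ be the $m\times(r+s)!$ matrix $(c_{i,d}(z))$. The previous proposition says the rows of $C(z)$ are linearly independent over $\mathbb{C}(z)$, equivalently $C(z)$ has rank $m$ over $\mathbb{C}(z)$, equivalently some $m\times m$ minor of $C(z)$ is a nonzero polynomial. A relation $\sum_i a_i \bar{\mathcal{P}_i}=0$ with $a_i\in\mathbb{C}$ amounts to $(a_1,\dots,a_m)C(\delta)=0$, so it suffices to show $C(\delta)$ still has rank $m$. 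Here is where I would invoke the explicit input available to us: by Jung–Kim's semisimplicity computation (used already in Section 3), there are \emph{infinitely many} values $\delta_0\in\mathbb{C}$ for which $B_{r,s}(\delta_0)$ is semisimple, and for each such $\delta_0$ the specialisation $\mathcal{P}_i\mapsto p_i(L_1^{(\delta_0)},\dots,L_{r+s}^{(\delta_0)})$ lands in a set known to be linearly independent (it is part of a basis of $Z(B_{r,s}(\delta_0))$). Hence $C(\delta_0)$ has rank $m$ for infinitely many $\delta_0$, so every $m\times m$ minor of $C(z)$ that we might hope to be nonzero — in fact at least one of them, by the $\mathbb{C}(z)$-rank statement — is a polynomial with infinitely many... no: more carefully, the argument must run the other way.

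So instead I would argue directly via the structure of $Z(B_{r,s}^{\mathbb{C}[z]}(z))$ as a free module. Because $\{\mathcal{P}_i\}$ is a $\mathbb{C}[z]$-basis of the free module $Z:=Z(B_{r,s}^{\mathbb{C}[z]}(z))$, reduction modulo the maximal ideal $\langle z-\delta\rangle$ gives a $\mathbb{C}$-vector space $Z\otimes_{\mathbb{C}[z]}\mathbb{C}[z]/\langle z-\delta\rangle$ of dimension exactly $m$ with basis the images $\bar{\mathcal{P}_i}$ — this is just the standard fact that reduction of a basis of a free module over a commutative ring is a basis after tensoring with any quotient. It remains to check that the natural map $Z\otimes_{\mathbb{C}[z]}\mathbb{C}[z]/\langle z-\delta\rangle \to B_{r,s}^{\mathbb{C}}(\delta)$ is injective, equivalently that $Z$ is a direct summand (or at least a pure submodule) of the free module $B_{r,s}^{\mathbb{C}[z]}(z)$ as a $\mathbb{C}[z]$-module; then the images $\bar{\mathcal{P}_i}$ remain linearly independent in $B_{r,s}^{\mathbb{C}}(\delta)$, giving $\dim_\mathbb{C} Z(B_{r,s}^{\mathbb{C}}(\delta))\geq m=|\Lambda_{r,s}|$. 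Purity holds because the quotient $B_{r,s}^{\mathbb{C}[z]}(z)/Z$ is a finitely generated $\mathbb{C}[z]$-module whose torsion, if any, could only be supported at the finitely many non-semisimple specialisations; I would pin this down by noting that $\mathbb{C}[z]$ is a PID, so $B_{r,s}^{\mathbb{C}[z]}(z)/Z\cong \mathbb{C}[z]^{(r+s)!-m}\oplus T$ with $T$ torsion, and then showing $T=0$ by a rank/specialisation count at a single semisimple $\delta_0$: after tensoring with $\mathbb{C}(z)$ the sequence $0\to Z\otimes\mathbb{C}(z)\to B^{\mathbb{C}(z)}\to (B/Z)\otimes\mathbb{C}(z)\to 0$ shows $\operatorname{rank}(B/Z)=(r+s)!-m$, and the torsion part $T$ would force, at any $\delta_0$ in its support, $\dim_\mathbb{C} Z(B_{r,s}(\delta_0))\otimes\cdots$ to jump — but at a semisimple $\delta_0$ the dimension of the centre is exactly $m$, and $Z\otimes\mathbb{C}[z]/\langle z-\delta_0\rangle$ already has dimension $\geq m$, forcing the $\mathrm{Tor}_1$ term (hence $T$ localised at $\delta_0$) to vanish — and we may choose $\delta_0$ outside the support of $T$ since that support is finite, contradiction unless $T=0$.

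The main obstacle is precisely this last purity/torsion-freeness point: granting it, the statement is immediate, but establishing that $Z(B_{r,s}^{\mathbb{C}[z]}(z))$ is a pure $\mathbb{C}[z]$-submodule (not merely of the right rank) is the crux, and I expect the cleanest route is the PID structure theorem combined with the observation that the semisimple locus is cofinite, so any torsion in the quotient would be killed by localising at a generic — hence semisimple — point, where the centre dimension is pinned to $m$ and leaves no room for a $\mathrm{Tor}_1$ contribution. Once purity is in hand, injectivity of $Z\otimes_{\mathbb{C}[z]}\mathbb{C}[z]/\langle z-\delta\rangle\hookrightarrow B_{r,s}^{\mathbb{C}}(\delta)$ holds for \emph{every} $\delta$, and since $\bar{\mathcal{P}_i}$ are visibly central, we conclude $\{\bar{\mathcal{P}_i}\mid 1\leq i\leq m\}$ is a linearly independent subset of $Z(B_{r,s}^{\mathbb{C}}(\delta))$, whence $\dim_\mathbb{C} Z(B_{r,s}^{\mathbb{C}}(\delta))\geq |\Lambda_{r,s}|$.
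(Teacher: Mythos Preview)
Your overall strategy is right and is in fact the same as the paper's, but the justification of the key step---purity of $Z:=Z(B_{r,s}^{\mathbb{C}[z]}(z))$ inside $B:=B_{r,s}^{\mathbb{C}[z]}(z)$---does not go through as written. You correctly reduce everything to showing that $B/Z$ is torsion-free over $\mathbb{C}[z]$. However, your $\mathrm{Tor}_1$/support argument does not establish this. At best, for the one semisimple $\delta_0$ at which the chosen $p_i(L_1,\dots,L_{r+s})$ are known to be independent (recall the $p_i$ were fixed once, from that single $\delta_0$), you can conclude that $\delta_0\notin\operatorname{supp}(T)$. Even if you could vary $\delta_0$ over all semisimple values, you would only deduce that $\operatorname{supp}(T)$ is contained in the finite non-semisimple locus---and that is not a contradiction; nothing you have said prevents $T$ from living there. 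The sentence ending ``contradiction unless $T=0$'' contains no contradiction.

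The missing observation is much more elementary than anything involving $\mathrm{Tor}$ or the structure theorem over a PID. Purity of $Z\subset B$ is immediate from the definition of the centre: if $f(z)R$ is central for some nonzero $f(z)\in\mathbb{C}[z]$, then for every $b\in B$ we have $f(z)(Rb-bR)=0$, and since $B$ is free over $\mathbb{C}[z]$ the element $f(z)$ is a non-zero-divisor, so $Rb=bR$ and $R$ is already central. Thus $B/Z$ is torsion-free. The paper runs exactly this argument in the specific case $f(z)=z-\delta$, without naming it ``purity'': from $\sum_i a_i\bar{\mathcal{P}}_i=0$ one writes $\sum_i a_i\mathcal{P}_i=(z-\delta)R$ with $R\in B$, observes $R\in Z$ by the non-zero-divisor remark above, expands $R=\sum_i b_i(z)\mathcal{P}_i$ using the $\mathbb{C}[z]$-basis of the previous proposition, and reads off $a_i=(z-\delta)b_i(z)$, which forces $a_i=0$ since the $a_i$ are scalars. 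No semisimple specialisation is needed at this stage.
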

	 
	 \begin{proof}
	 	First note that as $\mathcal{P}_i$ are central elements in $B_{r,s}^{\mathbb{C}[z]}(z)$ we have that  $\bar{\mathcal{P}_i}$ are central in $B_{r,s}^{\mathbb{C}}(z)$.
	 	Now let $$\sum\limits_{i=1}^m a_i\bar{\mathcal{P}}_i=0$$
	 	for some $a_i\in \mathbb{C}$. Then, we have
	 	$$\sum\limits_{i=1}^m a_i\bar{\mathcal{P}_i}=\sum\limits_{i=1}^m \overline{a_i\mathcal{P}_i}=0$$ and so 	
	 	\begin{equation}\label{eqq}
	 	\sum\limits_{i=1}^m a_i\mathcal{P}_i=(z-\delta)R
	 	\end{equation} 
 	for some $R\in Z(B_{r,s}^{\mathbb{C}[z]}(z))$. Thus, $R$ can be written as $R=\sum\limits_{i=1}^m b_i(z)\mathcal{P}_i$, for some $b_i(z)\in \mathbb{C}[z]$. Using now Equation \eqref{eqq} we get 
 	$\sum\limits_{i=1}^m a_i\mathcal{P}_i=\sum\limits_{i=1}^m(z-\delta)b_i(z)\mathcal{P}_i$. As $a_i\in \mathbb{C}$, $b_i(z)\in \mathbb{C}[z]$ and $\{\mathcal{P}_i\,|\, 1\leq i\leq m\}$ are linearly independent, we must have $a_i=(z-\delta)b_i(z)=0$, $\forall i=1,\dots, m$.
	 \end{proof}

\section{Shalile's cycle type and general strategy}

\subsection{Walled generalized cycle types} The following definition is Definition 7.3 in \cite{Sha}. For a diagram $d\in B_{r,s}(\delta)$  we define the  \emph{walled generalized cycle type} $c(d)$ of $d$ to be a set of words (called \emph{parts}) in the alphabet $L, R, N$ and $S$, obtained as follows: We first connect each vertex in
the top row of $d$ with the vertex in the bottom row below it. The parts of $c(d)$ correspond to the connected components of this new graph, as follows:  We take a connected component of the new graph, we pick a vertex of it and we follow the path, until all the edges of the connected component have been read off once.  Following the path, we record in order with the letters $L, R, N$ and $S$ the types of the edges of the  diagram $d$ which are traversed. More precisely, we record:
\begin{itemize}
	\item $N$ if the type of the edge is a northern arc.
	\item $S$ if the type of the edge is a southern arc.
	\item $L$ if the type of the edge is a propagating line to the left of the wall.
		\item $R$ if the type of the edge is a propagating line to the right of the wall.
\end{itemize}

For example, for the diagrams $x$ and $y$ we saw in Section \ref{def} we have 
$c(x)=\{LL, NSNS\}$ and $c(y)=\{L,L, NS,NS\}$.

Two parts are \emph{equivalent} if one is obtained from the other by repeated cyclic permutation and/or reverse reading. Two walled generalized cycle types are \emph{equal} if their parts are equivalent. For example, we also have $c(y)=\{L,L,SN, SN\}$ and $c(x)=\{LL, SNSN\}$.

\begin{rem}\label{ct} From the definition of (equal) walled generalized cycle types we notice the following: 
	\begin{enumerate}
\item	Each part of $c(d)$ is of one of the following forms:
	\begin{enumerate}
		\item $L^a$, $a\geq 1$.
		\item $R^b$, $b\geq 1$.
		\item $NR^{b_1}SL^{a_1}NR^{b_2}SL^{a_2}\dots$, \, for some $a_i, b_i\geq 0$.
			\end{enumerate}
		\item The number of $N$'s in $c(d)$ equals the number of $S$'s in $c(d)$. We denote this number by $t$. Then, we also have that the number of $L$'s in $c(d)$ equals $r-t$, while the  number of $R$'s in $c(d)$ equals $s-t$.
		\item In general, $c(d)\not=c(d^*)$. For example, we consider the following diagram $d$ of the walled Brauer algebra $B_{3,3}(\delta)$:
			$$d=\vcenter{\hbox{
				\begin{tikzpicture}[scale=0.75]
				\draw[gray,very thick] (0,0) rectangle (7,3);
				\foreach \x in {1,2,3,4,5,6} \filldraw (\x,3) circle (2pt);
				\foreach \x in {1,2,3,4,5,6} \filldraw (\x,0) circle (2pt);
				\draw[gray, thick, dashed] (3.5,3.25) to (3.5, -0.25);
				\draw (3,0) to [bend left=80] (4,0);
				\draw (1,3) to [bend right=80] (6,3);
				\draw (2,0) to [bend left=80] (6,0);
				\draw (2,3) to [bend right=80] (5,3);
				\draw[thick] (3,3) to (1,0);
				\draw[thick] (4,3) to (5,0);
			\end{tikzpicture}}}.$$ We have $c(d)=\{NSNRSL\}$. On the other hand, 
		$$d^*=\vcenter{\hbox{
				\begin{tikzpicture}[scale=0.75]
				\draw[gray,very thick] (0,0) rectangle (7,3);
				\foreach \x in {1,2,3,4,5,6} \filldraw (\x,3) circle (2pt);
				\foreach \x in {1,2,3,4,5,6} \filldraw (\x,0) circle (2pt);
				\draw[gray, thick, dashed] (3.5,3.25) to (3.5, -0.25);
				\draw (3,3) to [bend right=80] (4,3);
				\draw (1,0) to [bend left=80] (6,0);
				\draw (2,3) to [bend right=80] (6,3);
				\draw (2,0) to [bend left=80] (5,0);
				\draw[thick] (1,3) to (3,0);
				\draw[thick] (5,3) to (4,0);
			(	
			\end{tikzpicture}}}$$
		We have $c(d^*)=\{NSLNRS\}$. Note that  $c(d)\not=c(d^*)$. A description of how to obtain $c(d^*)$ from $c(d)$ in general is given in \cite[Lemma 2.14]{Sha}.
	\end{enumerate} 
\end{rem}
The following theorem is Theorem 7.7 in \cite{Sha}.
\begin{thm}\label{conj}
Two diagrams of $B_{r,s}(\delta)$ are $S_{r}\times S_{s}$-conjugate if and only if they have equal walled generalized cycle types.
\end{thm}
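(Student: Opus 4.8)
The plan is to encode a diagram $d$ by the decorated graph used to define its walled generalized cycle type: let $G(d)$ be the graph on the $2(r+s)$ vertices of $d$ obtained by adjoining to $d$ a ``vertical'' edge joining the top vertex $i$ to the bottom vertex $i$ for every $i$, decorated by recording for each edge whether it is vertical or comes from $d$ (and in the latter case its type $L$, $R$, $N$ or $S$), and for each vertex its row (top or bottom) and its side of the wall. I will show that $d$ and $d'$ are $S_r\times S_s$-conjugate if and only if $G(d)$ and $G(d')$ are isomorphic as decorated graphs, and separately that two decorated cycles occurring inside such graphs are isomorphic exactly when the words one reads off them are equivalent parts; since $c(d)$ records precisely the multiset of words of the connected components of $G(d)$, these two facts together give the theorem.

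For the forward implication I would use the standard fact that conjugating a diagram by $\sigma\in S_r\times S_s$ simply relabels its top and its bottom vertices by the same permutation $\sigma$, which preserves the wall; hence $i'\mapsto\sigma(i)'$, $i''\mapsto\sigma(i)''$ is a decoration-preserving isomorphism $G(d)\to G(\sigma d\sigma^{-1})$, so in particular $c(d)=c(\sigma d\sigma^{-1})$.

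For the converse, I would first note that every vertex of $G(d)$ has degree $2$ — it meets exactly one edge of $d$ and exactly one vertical edge — so $G(d)$ is a disjoint union of cycles in which edges of $d$ alternate with vertical edges, and the word read off such a cycle is one of the parts of $c(d)$ (up to cyclic rotation and reversal). Given $c(d)=c(d')$, I would match the cycles of $G(d)$ with those of $G(d')$ carrying equivalent words, take a decoration-preserving isomorphism between each matched pair, and glue to a decoration-preserving isomorphism $\phi\colon G(d)\to G(d')$. Since $\phi$ preserves rows it restricts to a bijection of top vertices, defining a permutation $\pi$ of $\{1,\dots,r+s\}$ via $\phi(i')=\pi(i)'$; preservation of vertical edges then forces $\phi(i'')=\pi(i)''$, and preservation of sides forces $\pi\in S_r\times S_s$. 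As $\phi$ carries the edges of $d$ onto those of $d'$ preserving their types while acting on vertices by $\pi$, this says exactly $d'=\pi d\pi^{-1}$.

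The step I expect to be the main obstacle is producing the decoration-preserving isomorphism between two cycles carrying equivalent words, since the word of a cycle does not record the row or side of the vertex at which one begins reading. What I would prove is that a part word $w$, together with the bare fact that it is realised by some cycle in some $G(d)$, already determines the decorated cycle up to decoration-preserving isomorphism: once a starting edge is fixed, the row and side of the base vertex propagate along $w$ in a fully determined way (a vertical edge, or an $L$- or $R$-edge, flips the row and fixes the side, whereas an $N$- or $S$-edge fixes the row and flips the side), and as soon as $w$ contains one letter $N$ or $S$ this forces both coordinates of the base vertex, so the whole vertex labelling is pinned down. By Remark \ref{ct}(1) the only parts containing neither $N$ nor $S$ are $w=L^a$ and $w=R^b$; for these there are two admissible labellings, but they are exchanged by reading the cycle in the opposite direction, which is exactly the reversal allowed in the equivalence of parts. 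Hence equivalent words yield isomorphic decorated cycles, which completes the converse.
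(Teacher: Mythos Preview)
The paper does not give its own proof of this theorem; it is simply quoted as Theorem~7.7 of \cite{Sha}. So there is no in-paper argument to compare against. Your direct proof via the decorated graph $G(d)$ is a sound approach and essentially reproduces Shalile's argument, but there is one small slip in your case analysis.

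You claim that ``as soon as $w$ contains one letter $N$ or $S$ this forces both coordinates of the base vertex''. This is not quite right. An $N$- or $S$-edge pins down the \emph{row} of its endpoints (top for $N$, bottom for $S$), while an $L$- or $R$-edge pins down the \emph{side} (left for $L$, right for $R$). Thus both coordinates of the base vertex are forced only when $w$ contains at least one letter from $\{N,S\}$ \emph{and} at least one from $\{L,R\}$. By Remark~\ref{ct}(1) the parts missing an $L/R$ letter are exactly those of the form $(NS)^k$; for these the row is forced but the side of the base vertex is not, and there are again two admissible labellings. The fix is the same reversal trick you already use for $L^a$ and $R^b$: starting from the other endpoint of the first $N$-edge and reading in the opposite direction yields an equivalent word (indeed $(NS)^k$ again after a cyclic shift) and interchanges the two side-labellings, so the two decorated cycles are isomorphic. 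With this one additional case your argument is complete.
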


\subsection{A general strategy}
Let $\mathcal{B}$ be the diagram basis of $B_{r,s}(\delta)$. We denote by $C_{r,s}$ the set of all generalised walled cycle types for $B_{r,s}(\delta)$.
For each walled generalized cycle type $\mu\in C_{r,s}$ we set $\mathcal{B}_{\mu}:=\{d\in \mathcal{B}\,|\, c(d)=\mu\}$. 
 It follows from Theorem \ref{conj} that the centralizer of $Z_{B_{r,s}(\delta)}(\mathbb{C}(\mathfrak{S}_r\times \mathfrak{S}_s))$ has a basis consisting of all elements of the form   $\sum\limits_{d\in \mathcal{B}_{\mu}}d$ where $\mu$ runs over all walled generalised cycle types. 
 Recall from Section \ref{def} that the walled Brauer algebra $B_{r,s}(\delta)$ is generated by
 the generators of  $S_r\times S_s$ and by the generator $e$.
  A central element of $B_{r,s}(\delta)$ is therefore an element of $Z_{B_{r,s}(\delta)}(\mathbb{C}(S_r\times S_s))$ which commutes with the generator $e$. Let us consider an arbitrary element of $Z_{B_{r,s}(\delta)}(\mathbb{C}(S_r\times S_s))$, which is of the form $\sum\limits_{\mu}a_{\mu} \sum\limits_{d\in \mathcal{B}_{\mu}}d$, for some $a_{\mu}\in \mathbb{C}$. In order for this element to be in the centre of $B_{r,s}(\delta)$, it needs to satisfy
	\begin{equation} \label{eq}
		\sum\limits_{\mu}a_{\mu} \sum\limits_{d\in \mathcal{B}_{\mu}}(de-ed)=\sum\limits_{x\in \mathcal{B}}b_xx = 0.
	\end{equation} 	
	So we need to consider the system 
	\begin{equation}\label{sys}
	\{b_x = 0 \, | \, x\in \mathcal{B}\}
	\end{equation}
	as a system of equations in the variables $a_\mu$,  $\mu\in C_{r,s}$.  Showing that the rank of this system is $|C_{r,s}|-|\Lambda_{r,s,}|$ would prove Conjecture \ref{mainconjecture}. 
In fact, using Proposition \ref{gc}, it is enough to show that the rank is at least $|C_{r,s}|-|\Lambda_{r,s}|$. Thus our strategy is to find $|C_{r,s}|-|\Lambda_{r,s}|$ linearly independent equations among $\{b_x = 0 \, | \, x\in \mathcal{B}\}$.  To identify which equations we should be considering, let us start by making the following observation.

		\begin{defn} Let $\mu\in C_{r,s}$. We call a part of $\mu$ of type $(a)$, $(b)$, or $(c)$ of the form $NS$ in Remark \ref{ct}(1) a \emph{trivial part}.
			\end{defn}
			
				\begin{prop}\label{bij} 
			There is a bijection between the set $\Lambda_{r,s}$ and the set of diagrams in $B_{r,s}(\delta)$ having only trivial parts.
		\end{prop}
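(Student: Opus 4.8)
The plan is to produce explicit, mutually inverse bijections between $\Lambda_{r,s}$ and the set of walled generalized cycle types all of whose parts are trivial (by Theorem~\ref{conj} this set equally well indexes the $S_r\times S_s$-conjugacy classes of diagrams with only trivial parts, hence the orbit sums $\sum_{d\in\mathcal{B}_\nu}d$ attached to them). By Remark~\ref{ct}(1), a cycle type $\nu$ with only trivial parts is precisely a multiset consisting of some parts $L^{a}$ with $a\geq 1$, some parts $R^{b}$ with $b\geq 1$, and some number $k\geq 0$ of copies of the word $NS$. First I would set $\Phi(\nu)=(\lambda,\mu)$, where $\lambda$ is the partition whose parts are the exponents of the $L$-parts of $\nu$ and $\mu$ is the partition whose parts are the exponents of the $R$-parts of $\nu$.

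The only thing to verify for $\Phi$ is that $(\lambda,\mu)\in\Lambda_{r,s}$, and this is where the precise shape of $\Lambda_{r,s}$ is used. Let $t$ be the number of $N$'s occurring in $\nu$. By Remark~\ref{ct}(2) this equals the number of $S$'s, and since the only parts of $\nu$ containing an $N$ or an $S$ are the $k$ copies of $NS$, we get $t=k$. Remark~\ref{ct}(2) also gives that the number of $L$'s is $r-t$ and the number of $R$'s is $s-t$, that is $|\lambda|=r-k$ and $|\mu|=s-k$; in particular $0\le k\le\min(r,s)$, so $(\lambda,\mu)$ lies in the $k$-th summand of $\Lambda_{r,s}$. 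Note also that $k$ is recovered from $(\lambda,\mu)$ via $k=r-|\lambda|=s-|\mu|$, so the summands of $\Lambda_{r,s}$ really are disjoint.

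For the inverse, given $(\lambda,\mu)$ in the $k$-th summand of $\Lambda_{r,s}$ I would let $\Psi(\lambda,\mu)$ be the cycle type with one part $L^{\lambda_i}$ for each nonzero $\lambda_i$, one part $R^{\mu_j}$ for each nonzero $\mu_j$, and $k$ copies of $NS$. To confirm this is a genuine element of $C_{r,s}$ I would exhibit a diagram realizing it: northern arcs joining top vertex $i$ to top vertex $r+i$ for $1\le i\le k$; southern arcs joining bottom vertex $i$ to bottom vertex $r+i$ for $1\le i\le k$; left propagating lines on the remaining left vertices $\{k+1,\dots,r\}$ forming a permutation with cycle lengths $\lambda_1,\lambda_2,\dots$ (possible since $|\lambda|=r-k$); and right propagating lines on $\{r+k+1,\dots,r+s\}$ forming a permutation with cycle lengths $\mu_1,\mu_2,\dots$. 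Computing its walled generalized cycle type directly --- each length-$\ell$ permutation cycle among the left (resp.\ right) propagating lines yields a part $L^{\ell}$ (resp.\ $R^{\ell}$), and each northern/southern arc pair on the same vertices $\{i,r+i\}$ yields a part $NS$ --- shows it equals $\Psi(\lambda,\mu)$, which visibly has only trivial parts.

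It then remains to check that $\Phi$ and $\Psi$ are mutually inverse: $\Phi\circ\Psi=\mathrm{id}$ is immediate from reading the parts off the constructed cycle type, and $\Psi\circ\Phi=\mathrm{id}$ holds because a cycle type with only trivial parts is completely determined by its multiset of $L$-exponents, its multiset of $R$-exponents, and its number of $NS$-parts, which are exactly $\lambda$, $\mu$ and $k$. I do not expect a genuine obstacle here: the argument is essentially bookkeeping, and its only substantive points are that the integer $k$ is forced and compatible on both sides --- which is exactly the defining condition $|\lambda|=r-k$, $|\mu|=s-k$ of $\Lambda_{r,s}$, supplied by Remark~\ref{ct}(2) --- and that $\Psi(\lambda,\mu)$ is attained by an actual diagram, supplied by the explicit construction above.
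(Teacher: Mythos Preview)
Your argument is correct and follows the same route as the paper: both identify a bipartition $(\lambda,\mu)$ with the cycle type $\{L^{\lambda_1},L^{\lambda_2},\dots,R^{\mu_1},R^{\mu_2},\dots,(NS)^k\}$, using Remark~\ref{ct}(2) to match up the sizes. The paper simply states this map and asserts it is a bijection, whereas you have spelled out the inverse, the well-definedness, and the realizability by an explicit diagram; there is no substantive difference in approach.
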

		\begin{proof} Let $k\in \mathbb{N}_0$ and let $\lambda=(\lambda_1, \lambda_2, \dots)$,  $\mu=(\mu_1,\mu_2,\dots)$ partitions of $r-k$ and $s-k$, respectively. The following map is a bijection:
			$$(\lambda, \mu)\mapsto \left\{d\in B_{r,s}(\delta)\,\,|\,\, c(d)=\{L^{\lambda_1},\,L^{\lambda_2},\dots, R^{\mu_1},\,R^{\mu_2},\dots, \underbrace{NS,\, NS,\dots, NS}_{k \text { terms}}\}\right\}.$$
			\end{proof}

Thus in order to prove Conjecture \ref{mainconjecture}, it would be enough to pick one diagram $x\in \mathcal{B}_\mu$ for each cycle type $\mu\in C_{r,s}$ containing at least one non-trivial part and show that the set of equations $b_x=0$ for all such $x$'s are linearly independent. We will do this in the case $s=1$ in the next section.

\section{The center of $B_{r,1}(\delta)$}
Let $\mathcal{B}$ be the diagram basis of $B_{r,1}(\delta)$, consisting of $(r+1)!$ diagrams. We recall that for each walled generalized cycle type $\mu$ we set $\mathcal{B}_{\mu}=\{d\in \mathcal{B}\,|\, c(d)=\mu\}$.

The following lemma is easy to see but note that  it is true only for the case of $B_{r,1}(\delta)$ (see Remark \ref{ct} (3)).
\begin{lem}\label{flip}
	The flip map $*$ gives a bijection $\mathcal{B}_{\mu}\rightarrow \mathcal{B}_{\mu}$, $d\mapsto d^*$ for each $\mu\in C_{r,s}$.
\end{lem}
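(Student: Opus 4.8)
The plan is to verify the two things that the statement requires: first, that $*$ sends a diagram of $B_{r,1}(\delta)$ to another diagram of $B_{r,1}(\delta)$ (this is already recorded in Section~\ref{def}, where $*$ is defined as a $\mathbb{C}$-linear anti-automorphism, so $d^*$ is again a walled Brauer diagram); and second, that $c(d^*)=c(d)$ for every such $d$, which is the content that is genuinely special to the case $s=1$. Since $*$ is an involution, once we know it preserves each $\mathcal{B}_\mu$ it is automatically a bijection $\mathcal{B}_\mu\to\mathcal{B}_\mu$. So the whole lemma reduces to the equality of cycle types $c(d)=c(d^*)$.

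To prove $c(d)=c(d^*)$, I would invoke Remark~\ref{ct}(2): when $s=1$ the number of $R$'s in $c(d)$ is $s-t=1-t$, which forces $t\in\{0,1\}$, where $t$ is the common number of $N$'s and $S$'s. If $t=0$, then $d$ has no arcs at all, so $d$ lies in the subalgebra $\mathbb{C}(S_r\times S_1)\cong\mathbb{C}S_r$; here $c(d)$ records only the cycle type of the underlying permutation of $S_r$ (a multiset of words $L^{a_i}$ together with a single $R$), and flipping a permutation diagram inverts the permutation, which has the same cycle type — hence $c(d^*)=c(d)$. If $t=1$, then $d$ has exactly one northern arc and one southern arc, each necessarily joining the single right-hand vertex $r+1$ to some left-hand vertex, and exactly one propagating line to the right of the wall is impossible (there is only one right vertex and it is used by the arcs), so in fact all of the remaining $r-1$ left vertices are matched either by propagating lines to the left or are absorbed into the unique non-trivial part. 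Concretely, $c(d)$ consists of some words $L^{a_i}$ (from the propagating lines that form closed loops on the left after identifying top and bottom vertices) together with one non-trivial part of the form $NR^0SL^{a}NR^0SL^{b}\cdots$ — but with only one $N$ and one $S$ available this non-trivial part is exactly $NSL^{c}$ for a single $c\ge 0$ (where $c$ counts the left-propagating edges strung between the two arc-endpoints). Flipping $d$ swaps the northern and southern arcs and reverses all propagating lines; reading the resulting component in the opposite direction and using that parts are taken up to cyclic permutation and reverse reading (so $NSL^c$ and $L^cSN\sim SNL^c\sim\dots$ all coincide), one checks $c(d^*)=c(d)$.

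The main obstacle is purely bookkeeping: making the case $t=1$ argument airtight requires tracking precisely how the single non-trivial component of the "top-to-bottom-identified" graph of $d$ transforms under flipping, and confirming that the word it produces is equivalent, under cyclic rotation and reverse reading, to the word produced by $d$. The key simplification — and the reason the lemma fails for general $s$, as Remark~\ref{ct}(3) shows — is that with $s=1$ there is essentially one arc of each type, so the non-trivial part is forced into the very symmetric shape $NSL^c$, for which invariance under the reverse-reading equivalence is immediate; for $s\ge 2$ the longer alternating patterns $NR^{b_1}SL^{a_1}NR^{b_2}SL^{a_2}\cdots$ are genuinely reordered by the flip and need not be equivalent. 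I would close the proof by remarking that the general recipe for $c(d^*)$ in \cite[Lemma 2.14]{Sha} specializes, when $s=1$, to the identity on cycle types, which gives an alternative one-line justification.
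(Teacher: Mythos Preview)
Your proposal is correct. The paper does not actually give a proof of this lemma; it simply asserts that the result is ``easy to see'' (with a pointer to Remark~\ref{ct}(3) for why it fails when $s\geq 2$), and your argument is a faithful and accurate unpacking of why: the case split $t\in\{0,1\}$ forced by $s=1$, the observation that flipping swaps $N\leftrightarrow S$ and fixes $L$, and the fact that the only possible non-trivial part $NSL^{c}$ is invariant under this swap once one uses the reverse-reading equivalence. Your closing remark that \cite[Lemma~2.14]{Sha} specialises to the identity on cycle types when $s=1$ is exactly the ``one-line'' version the paper presumably has in mind.
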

We now consider equation \eqref{eq} and system \eqref{sys}. Our first attempt to simplify this system is to find diagrams $x\in B_{r,1}(\delta)$ for which we have $b_x=0$.
The first result in this direction is the following proposition:

\begin{prop}\label{pro}
For any $x\in \mathcal{B}$ we have	$b_{x^*}=-b_x$. In particular, if $x=x^*$ then $b_x=0$.
\end{prop}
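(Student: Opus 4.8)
The plan is to exploit the anti-automorphism $*$ together with the fact that $e^* = e$. Recall we are looking at equation \eqref{eq}, which expands $\sum_\mu a_\mu \sum_{d\in\mathcal B_\mu}(de-ed) = \sum_{x\in\mathcal B} b_x x$. First I would apply $*$ to the entire left-hand side. Since $*$ is a $\mathbb C$-linear anti-automorphism fixing $e$, we have $(de-ed)^* = e^*d^* - d^*e^* = ed^* - d^*e = -(d^*e - ed^*)$. By Lemma \ref{flip}, as $d$ runs over $\mathcal B_\mu$ so does $d^*$, so $\sum_{d\in\mathcal B_\mu}(d^*e - ed^*) = \sum_{d\in\mathcal B_\mu}(de-ed)$. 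Hence applying $*$ to the left-hand side of \eqref{eq} multiplies it by $-1$:
\[
\left(\sum_{x\in\mathcal B} b_x x\right)^{\!*} \;=\; \sum_{x\in\mathcal B} b_x\, x^* \;=\; -\sum_{x\in\mathcal B} b_x x.
\]

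Next I would re-index the middle sum: writing $x = y^*$, it becomes $\sum_{y\in\mathcal B} b_{y^*}\, y$ (using that $*$ is an involution permuting the diagram basis $\mathcal B$). Comparing coefficients of each basis diagram $y\in\mathcal B$ in the identity $\sum_{y} b_{y^*} y = -\sum_{y} b_y y$ gives $b_{y^*} = -b_y$ for all $y$, which is the first claim (after renaming $y$ to $x$). The second claim is then immediate: if $x = x^*$ then $b_x = b_{x^*} = -b_x$, so $2b_x = 0$ and hence $b_x = 0$ over $\mathbb C$.

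The only genuinely substantive points are (i) that $*$ is an anti-automorphism with $e^*=e$ — the first is stated in Section \ref{def}, and $e^*=e$ is visually clear from the diagram of $e$ since flipping it horizontally returns the same picture — and (ii) that $*$ preserves each $\mathcal B_\mu$, which is exactly Lemma \ref{flip} and is specific to the $s=1$ case. I expect no real obstacle here; the main thing to be careful about is the bookkeeping of the re-indexing $x\mapsto x^*$ and making sure the two applications of Lemma \ref{flip} (once on the left side of \eqref{eq}, implicitly once on the right) are correctly invoked so that the sign $-1$ is tracked consistently.
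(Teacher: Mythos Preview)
Your proposal is correct and follows essentially the same approach as the paper: apply $*$ to equation \eqref{eq}, use $e^*=e$ and Lemma \ref{flip} to turn the left-hand side into its negative, then re-index the right-hand side to compare coefficients. The paper's proof is slightly more terse but uses exactly the same ingredients in the same order.
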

\begin{proof}
	We apply the anti-automorphism $*$ to both sides of \eqref{eq} and we have:
	$$\begin{array}{lcl}
			\sum\limits_{\mu}a_{\mu} \sum\limits_{d\in C_{\mu}}(de-ed)^*&=&\sum\limits_{x\in \mathcal{B}}b_xx^*\,\, \Longrightarrow \smallbreak\smallbreak\\
			\sum\limits_{\mu}a_{\mu} \sum\limits_{d\in C_{\mu}}(e^*d^*-d^*e^*)&=&\sum\limits_{x\in \mathcal{B}}b_xx^*\,\,\stackrel{e^*=e}{\Longrightarrow} \smallbreak\smallbreak\\
			\sum\limits_{\mu}a_{\mu} \sum\limits_{d\in C_{\mu}}(ed^*-d^*e)&=&\sum\limits_{x\in \mathcal{B}}b_xx^*\,\,\stackrel{\ref{flip}}{\Longrightarrow} \smallbreak\smallbreak\\
				\sum\limits_{\mu}a_{\mu} \sum\limits_{d\in C_{\mu}}(ed-de)&=&\sum\limits_{x\in \mathcal{B}}b_xx^*\,\,\Longrightarrow \smallbreak\smallbreak\\
		-\sum\limits_{\mu}a_{\mu} \sum\limits_{d\in C_{\mu}}(de-ed)&=&	\sum\limits_{x\in \mathcal{B}}b_xx^*\,\,\Longrightarrow \smallbreak\smallbreak\\
			-\sum\limits_{x\in \mathcal{B}}b_xx&=&\sum\limits_{x\in \mathcal{B}}b_xx^*\,\,\Longrightarrow \smallbreak\smallbreak\\
				-\sum\limits_{x\in \mathcal{B}}b_xx&=&\sum\limits_{x\in \mathcal{B}}b_{x^*}x.
	\end{array}$$
\end{proof}
The next proposition is another case of a basis diagram $x$, such that $b_x=0$.
\begin{prop}\label{pr1}
	Let $x=\vcenter{\hbox{
			\begin{tikzpicture}[scale=0.75]
				\draw[gray,very thick] (0,0) rectangle (5,3);
					\draw[gray,very thick] (0.3,0.3) rectangle (2.6,2.6)node[pos=.5] {$\mathcal{Q}$};
				
				\foreach \x in {3,4} \filldraw (\x,3) circle (2pt);
				\foreach \x in {3,4} \filldraw (\x,0) circle (2pt);
				\draw[gray, thick, dashed] (3.5,3.25) to (3.5, -0.25);
				\draw (3,3) to [bend right=80] (4,3);
				\draw (3,0) to [bend left=80] (4,0);

	\end{tikzpicture}}}\in B_{r,1}(\delta)$ for some $\mathcal{Q}\in S_{r-1}$ then $b_x=0$.
\end{prop}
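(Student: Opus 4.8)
The plan is to show that for the specified diagram $x$ — which consists of a permutation $\mathcal{Q}\in S_{r-1}$ on the first $r-1$ strands, a propagating line connecting vertex $r-1$... wait, actually $x$ has a northern arc and southern arc joining vertices $r$ and $r+1$, with $\mathcal{Q}$ acting on vertices $1,\dots,r-1$ — the coefficient $b_x$ in \eqref{eq} vanishes. Since $x$ has cycle type consisting only of trivial parts (the part structure of $\mathcal{Q}$ gives parts of type $L^a$, and the arcs on $r,r+1$ give one $NS$ part), by Proposition \ref{bij} such diagrams are precisely those \emph{not} contributing a cycle type with a non-trivial part; so morally $b_x=0$ should reflect that the central elements we seek do not ``see'' these diagrams. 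The concrete approach is to analyze directly which terms $de-ed$, for $d$ ranging over the diagram basis, can produce $x$ when expanded in the diagram basis.

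First I would set up the combinatorics of multiplying by $e$ in $B_{r,1}(\delta)$: recall $e$ (in the $s=1$ case) is the diagram with a northern arc joining vertices $r,r+1$, a southern arc joining vertices $r,r+1$, and vertical propagating lines on $1,\dots,r-1$. Multiplying a diagram $d$ on the right by $e$ merges the bottom-right structure of $d$ with these arcs; multiplying on the left by $e$ does the analogous thing on top. I would then observe that $x$ itself is ``$e$-like'' at the wall: its vertices $r$ and $r+1$ carry exactly a northern and a southern arc, and $x = \iota(\mathcal{Q}) \cdot e = e \cdot \iota(\mathcal{Q})$ where $\iota(\mathcal{Q})$ is the image of $\mathcal{Q}\in S_{r-1}\hookrightarrow S_r \subset B_{r,1}(\delta)$ (acting only on $1,\dots,r-1$), because such a $\mathcal{Q}$ commutes with $e$. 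The key point is to characterize, for each basis diagram $d$, whether $de$ or $ed$ has $x$ in its diagram-basis support, and with what coefficient (a power of $\delta$).

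The main step is a pairing/cancellation argument analogous to Proposition \ref{pro}. The idea: the diagrams $d$ for which $x$ appears in the expansion of $de$ (respectively $ed$) can be matched up so that the contributions to $b_x$ cancel in pairs, or alternatively, one shows the total coefficient of $x$ coming from all $\sum_\mu a_\mu \sum_{d\in\mathcal{B}_\mu} de$ equals the total coefficient coming from all $\sum_\mu a_\mu \sum_{d\in\mathcal{B}_\mu} ed$, using that the coefficient $a_\mu$ depends only on the cycle type $\mu$ and that $d\mapsto d^*$ is a cycle-type-preserving bijection (Lemma \ref{flip}) which swaps the roles of $de$ and $ed$ in producing $x$ (since $x=x^*$, as $\mathcal{Q}$ can be replaced by $\mathcal{Q}^{-1}$ after flipping and this is again summed over its whole cycle-type class). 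More precisely: if $d$ contributes $\delta^{n} x$ to $de$, then $d^*$ contributes $\delta^{n} x$ to $e d^*$... hmm, one must track this carefully, but the upshot should be $\sum_{d} [\text{coeff of }x\text{ in }de] = \sum_{d}[\text{coeff of }x\text{ in }ed]$ after grouping by cycle type, forcing $b_x = 0$. Concretely one can argue: $x$ has the form (propagating stuff on the left)$\,\times\,ee'$ near the wall, and $de = x$ in the basis forces $d$ to have a very restricted shape near vertices $r-1, r, r+1$ — I would enumerate these shapes and check the bookkeeping directly.

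\textbf{The main obstacle} I anticipate is the careful combinatorial bookkeeping of the $\delta$-powers (loop counts) and of exactly which diagrams $d$ feed into $x$ under left versus right multiplication by $e$: because $B_{r,1}$ has a single strand to the right of the wall, the wall-crossing arcs are forced, but a diagram $d$ may still interact with $e$ in several ways depending on where the strand attached to vertex $r+1$ (and to the endpoints on vertex $r$) goes, and one must confirm that the loop count and residual diagram in each case produce exactly $x$ with the claimed multiplicity. Once that case analysis is pinned down, the cancellation with the $a_\mu$-weights via Lemma \ref{flip} (in the same spirit as Proposition \ref{pro}) should close the argument.
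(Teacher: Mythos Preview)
Your flip-map route is a dead end here. If $de=\delta^n x$, then applying $*$ gives $ed^*=\delta^n x^*$, so $d^*$ contributes to $b_{x^*}$, not to $b_x$. Since $x^*$ is the diagram with $\mathcal{Q}^{-1}$ in the box, in general $x\neq x^*$, and the argument you sketch only reproduces Proposition~\ref{pro} (i.e.\ $b_{x^*}=-b_x$), not $b_x=0$. Your attempted fix --- ``$\mathcal{Q}$ can be replaced by $\mathcal{Q}^{-1}$ after flipping and this is again summed over its whole cycle-type class'' --- does not apply, because $b_x$ is the coefficient of a \emph{fixed} basis diagram $x$, not a class sum.

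The approach that actually works is the one you mention only in passing: enumerate all $d$ with $de=\delta^m x$ and match each with a $d'$ satisfying $ed'=\delta^m x$ and $c(d')=c(d)$. The paper carries this out explicitly. There are exactly three families of $d$ with $de\in\{x,\delta x\}$: (1) $d=\mathcal{Q}\in S_{r-1}$; (2) $d=x$ itself; (3) $d=x\,(i,r)$ for $1\le i\le r-1$. In cases (1) and (2) one takes $d'=d$. In case (3) the matching partner is $d'=(i,r)\,x$, and the substantive combinatorial check --- which your sketch does not reach --- is that $c\bigl(x(i,r)\bigr)=c\bigl((i,r)x\bigr)$. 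The paper verifies this by tracing how the part of $c(x)$ containing vertex $i$ (some $L^a$) gets merged with the $NS$ part in both $x(i,r)$ and $(i,r)x$, yielding the same cycle type $\{NS\,L^a,\ldots\}$ in both. Once this pairing is established, the $de$ and $ed$ contributions to $b_x$ cancel cycle-type by cycle-type, giving $b_x=0$.
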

\begin{proof}
	Let $d\in \mathcal{B}$ such that $de=\delta^mx$, $m\in\{0,1\}$. We have the following cases:
	\begin{enumerate}
		\item $d=\mathcal{Q}\in S_{r-1}\subseteq B_{r,1}(\delta)$, with $\mathcal{Q}$ as defined in the diagram of $x$. We have $de=x$.
		\item $d=x$. We have $de=\delta x$. 
		\item $d=x(i,\,r)$, where $(i,\,r)$ corresponds to the permutation diagram swapping $i$ and $r$. We have $de=x$. 
		\end{enumerate}
	
	We notice that for every diagram $d$ as described above, there is a diagram $d'\in \mathcal{B}$ such that $ed'=\delta^mx$, $m\in\{0,1\}$. More precisely, for the cases (1) and (2), the diagram $d'=d$. For case (3), we have $d'=(i,\, r)x$. We notice that the diagrams $d'$ we have here are all the diagrams $d'\in \mathcal{B}$ such that $de=\delta^mx$, $m\in\{0,1\}$. In order to prove that $b_x=0$ it is enough to prove that in each case, $d$ and $d'$ have the same walled generalized cycle type. Then the terms in the sum in equation \eqref{eq} cancel out and, hence, $b_x=0$. For cases (1) and (2) this is obvious. It remains to prove that the diagrams $x(i,\,r)$ and $(i,\,r)x$ have the same walled generalized cycle type.
	
	There are two types of parts in the walled generalized cycle type $c(x)$ of the diagram $x$:  The part $p_1=NS$ and the parts $p_j$, $j=2,3,\dots,k$, which are of 
	one of the forms: $L$, $LL$, $LLL$, $\dots$. Let $p_{j_0}$ be the part, which belongs to the connected component with vertex $i$ on top.
	
	In the diagram of $d$ there is a northern arc, which connects the vertices $r$ and $r+1$ on top row and a southern arc, which connects the vertices $i$ and $r$ on bottom row. The propagating lines are the same as the ones appearing in the diagram of $x$, with one difference: The propagating line which connects the vertex $i$ on bottom row with the vertex $i'$ on top row in the diagram of $x$, it connects now the vertex $r$ on bottom row with the vertex $i'$ on top row. Therefore, $c(d)=\{NSp_{j_0}, p_2,\,p_3, \, p_{j_0-1},\,p_{j_0+1}, \dots, p_k\}$.
	
	Similarly,  in the diagram of $d'$ there are two arcs, which are the flipped arcs of the diagram of $d$ and the propagating lines remaining from the diagram of $x$ with again one difference: the propagating line which connects the vertex $i$ on top row with the vertex $i'$ on bottom row in the diagram of $x$, it connects now the vertex $r$ on top row with the vertex $i'$ on the bottom row. Therefore, we have $c(d')=\{NSp_{j_0}, p_2,\,p_3, \, p_{j_0-1},\,p_{j_0+1}, \dots, p_k\}=c(d)$.
	\end{proof}

We consider now the system  \eqref{sys}. According to Proposition \ref{pr1}, the only possible diagrams $x$ with $b_x\not=0$ are of the form  $$
		\begin{tikzpicture}[scale=0.75]
			\draw[gray,very thick] (0,0) rectangle (5,3);
			
			\foreach \x in {3,4} \filldraw (\x,3) circle (2pt);
			\foreach \x in {1,4} \filldraw (\x,0) circle (2pt);
			\draw[gray, thick, dashed] (3.5,3.25) to (3.5, -0.25);
			\draw (3,3) to [bend right=80] (4,3);
			\draw (1,0) to [bend left=80] (4,0);
				\end{tikzpicture}\,\,\,\,\,\,\, \text { or }\,\,\,\,\,\,\,	\begin{tikzpicture}[scale=0.75]
\draw[gray,very thick] (0,0) rectangle (5,3);

\foreach \x in {1,4} \filldraw (\x,3) circle (2pt);
\foreach \x in {3,4} \filldraw (\x,0) circle (2pt);
\draw[gray, thick, dashed] (3.5,3.25) to (3.5, -0.25);
\draw (1,3) to [bend right=80] (4,3);
\draw (3,0) to [bend left=80] (4,0);
\end{tikzpicture}$$
Note that the second case can be obtained from the first by applying the flip map $*$. Therefore, by Proposition \ref{pro}, it is enough to consider only the first case.

\begin{prop}
	Let $x, y\in B_{r,1}(\delta)$ be of the form $$
	\begin{tikzpicture}[scale=0.75]
		\draw[gray,very thick] (0,0) rectangle (5,3);
		
		\foreach \x in {3,4} \filldraw (\x,3) circle (2pt);
		\foreach \x in {1,4} \filldraw (\x,0) circle (2pt);
		\draw[gray, thick, dashed] (3.5,3.25) to (3.5, -0.25);
		\draw (3,3) to [bend right=80] (4,3);
		\draw (1,0) to [bend left=80] (4,0);
	\end{tikzpicture}$$
with $y=\sigma x \sigma^{-1}$, for some $\sigma\in S_r$. Then, $b_x=b_y$.
\end{prop}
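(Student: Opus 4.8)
The plan is to exploit the $S_r$-conjugation symmetry of equation \eqref{eq} directly. Recall that a central element $z = \sum_\mu a_\mu \sum_{d \in \mathcal{B}_\mu} d$ already commutes with all of $S_r \times S_1 = S_r$ (since each $\sum_{d \in \mathcal{B}_\mu} d$ is an $S_r$-class sum), so for any $\sigma \in S_r$ we have $\sigma^{-1} z \sigma = z$. Applying $\sigma^{-1}(-)\sigma$ to equation \eqref{eq}, which reads $ze - ez = \sum_{x \in \mathcal{B}} b_x x$, and using $\sigma^{-1}(ze - ez)\sigma = z(\sigma^{-1}e\sigma) - (\sigma^{-1}e\sigma)z$ together with $\sigma^{-1} z \sigma = z$, I would like to extract a relation $b_{\sigma^{-1} x \sigma} = b_x$ for all $x$. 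The subtlety is that $\sigma^{-1} e \sigma \neq e$ in general, so conjugating \eqref{eq} does not literally give back the same equation; I need to be more careful.

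So the key step is the following. First I would record which diagrams $d \in \mathcal{B}$ contribute to the coefficient $b_x$ of a fixed diagram $x$ of the stated form (one northern arc on vertices $\{r, r+1\}$ on top, one southern arc on vertices $\{1, r+1\}$ on bottom, and a permutation part on the remaining $r-2$ strands): these are exactly the $d$ with $de = \delta^{m_d} x$ or $ed = \delta^{m_d'} x$ for $m_d, m_d' \in \{0,1\}$, with signs $+1$ for the $de$-terms and $-1$ for the $ed$-terms; this is the same bookkeeping carried out in the proof of Proposition~\ref{pr1}. Now suppose $y = \sigma x \sigma^{-1}$ with $\sigma \in S_r$. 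Since $x$ and $y$ have the same shape, the diagrams contributing to $b_y$ are $\sigma d \sigma^{-1}$ for $d$ contributing to $b_x$ — provided the contributing set is stable under simultaneous conjugation by $\sigma$. This requires knowing that $\sigma(de)\sigma^{-1}$ and $(\sigma d \sigma^{-1})e'$ agree for a suitable $e'$; concretely I would use that $\sigma e \sigma^{-1}$ is again a diagram of the form $e_{j, r+1}$ (a northern–southern arc pair on the left block crossing the wall at position $r+1$), and that the multiplication $d \mapsto de$, $d \mapsto ed$ interacts with left/right multiplication by permutations in a controlled way.

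The clean way to organise this: write the contribution to $b_x$ as a signed sum over pairs. Precisely, $b_x = \sum_{d} \varepsilon_{d,x}$, where the sum is over all $d \in \mathcal{B}$ together with a choice of "side", $\varepsilon = +1$ if $de = \delta^\bullet x$ and $\varepsilon = -1$ if $ed = \delta^\bullet x$, but crucially weighting by the scalar $a_{c(d)}$ — so really $b_x = \sum_d a_{c(d)}(\#\{\text{ways }de = \delta^\bullet x\} - \#\{\text{ways } ed = \delta^\bullet x\})$ with $\delta$-powers tracked. Then conjugation by $\sigma$ gives a bijection $d \leftrightarrow \sigma d \sigma^{-1}$ on $\mathcal{B}$ which preserves walled generalized cycle type (by Theorem~\ref{conj}, conjugate diagrams have equal cycle type, so $a_{c(d)} = a_{c(\sigma d \sigma^{-1})}$), preserves the number of middle loops, and sends "$de = \delta^m x$" to "$(\sigma d \sigma^{-1})(\sigma e \sigma^{-1}) = \delta^m y$". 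Here I would need the extra input that $\sigma e \sigma^{-1}$, although not equal to $e$, produces the same coefficient extraction — which holds because $x$ (resp. $y$) is uniquely determined among diagrams of its shape once we know the permutation part, and conjugation by $\sigma$ is a diagram automorphism of $B_{r,1}(\delta)$ fixing $e$ up to relabelling that is absorbed by the conjugation on both $d$ and the target. Matching the $de$-side with the $de$-side and the $ed$-side with the $ed$-side then yields $b_y = b_x$.

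The main obstacle I anticipate is precisely the mismatch $\sigma e \sigma^{-1} \neq e$: one cannot simply conjugate equation \eqref{eq} and read off the result, and one must instead argue at the level of the explicit (finite) list of contributing diagrams, checking that the conjugation-by-$\sigma$ bijection genuinely matches $de$-contributions to $x$ with $de$-contributions to $y$ and likewise on the $ed$-side, with all $\delta$-powers and cycle-type weights $a_\mu$ preserved. Given the very restricted shape of $x$ — only the three families of diagrams appearing in the proof of Proposition~\ref{pr1}, now with the southern arc moved to $\{1, r+1\}$ — this is a finite check entirely parallel to that proof, and I expect it to go through once the bijection is set up carefully.
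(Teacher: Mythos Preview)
Your proposal circles around the right idea --- conjugate equation~\eqref{eq} by $\sigma$ --- but you missed the key observation that makes the argument trivial, and the detour you take to compensate has a real gap.

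The point you overlooked: both $x$ and $y$ are assumed to be of the displayed form, meaning in particular that \emph{both} have their northern arc joining the top vertices $r$ and $r{+}1$. Conjugation by $\sigma\in S_r$ moves this arc to the top vertices $\{\sigma(r), r{+}1\}$, so the hypothesis forces $\sigma(r)=r$, i.e.\ $\sigma\in S_{r-1}$. Since $e$ involves only positions $r$ and $r{+}1$, this gives $\sigma e = e\sigma$ immediately. With that in hand, conjugating \eqref{eq} by $\sigma$ literally reproduces the left-hand side (because $\sigma^{-1}(de-ed)\sigma = (\sigma^{-1}d\sigma)e - e(\sigma^{-1}d\sigma)$ and conjugation permutes each $\mathcal{B}_\mu$ by Theorem~\ref{conj}), while the right-hand side becomes $\sum_x b_x\,\sigma^{-1}x\sigma = \sum_x b_{\sigma x\sigma^{-1}}x$. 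Comparing coefficients yields $b_y=b_x$. This is exactly the paper's proof.

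Your workaround --- matching contributing diagrams under $d\mapsto \sigma d\sigma^{-1}$ while allowing $\sigma e\sigma^{-1}\neq e$ --- breaks at the sentence ``I would need the extra input that $\sigma e\sigma^{-1}$, although not equal to $e$, produces the same coefficient extraction.'' That input is not available in general: if $\sigma\notin S_{r-1}$ then $\sigma e\sigma^{-1}=e_{\sigma(r),r+1}$, and the diagrams $d'$ with $d'e=\delta^\bullet y$ are \emph{not} the $\sigma$-conjugates of the $d$ with $de=\delta^\bullet x$. So without the observation $\sigma\in S_{r-1}$ the bijection you want does not exist, and with it your whole complication evaporates.
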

\begin{proof} We first notice that, since both diagrams $x$ and $y$ have  the northern arc that connects the vertices $r$ and $r+1$, we must have $\sigma \in S_{r-1}$. Therefore, $\sigma e=e \sigma$ and $\sigma^{-1} e=e \sigma^{-1}$.
	
	We now conjugate Equation \eqref{flip} by $\sigma$ and we get:
		$$\begin{array}{lcl}
		\sum\limits_{\mu}a_{\mu} \sum\limits_{d\in C_{\mu}}\sigma^{-1}(de-ed)\sigma&=&\sum\limits_{x\in \mathcal{B}}b_x\sigma^{-1}x \sigma\,\, \Longrightarrow \smallbreak\smallbreak\\
			\sum\limits_{\mu}a_{\mu} \sum\limits_{d\in C_{\mu}}(\sigma^{-1} de\sigma-\sigma^{-1} ed\sigma)&=&\sum\limits_{x\in \mathcal{B}}b_x\sigma^{-1} x \sigma\,\, \Longrightarrow \smallbreak\smallbreak\\
			\sum\limits_{\mu}a_{\mu} \sum\limits_{d\in C_{\mu}}(\sigma^{-1} d\sigma e-e\sigma^{-1} d\sigma)&=&\sum\limits_{x\in \mathcal{B}}b_x\sigma^{-1} x \sigma\,\, \stackrel{\ref{conj}}{\Longrightarrow} \smallbreak\smallbreak\\
				\sum\limits_{\mu}a_{\mu} \sum\limits_{d\in C_{\mu}}( de-e d)&=&\sum\limits_{x\in \mathcal{B}}b_x\sigma^{-1} x \sigma\,\, \Longrightarrow \smallbreak\smallbreak\\
				\sum\limits_{x\in \mathcal{B}}b_x x&=&\sum\limits_{x\in \mathcal{B}}b_{\sigma x \sigma^{-1}}x.
	\end{array}$$
		\end{proof}
	\begin{defn}\label{deff} 
	Let $x\in B_{r,1}(\delta)$ be of the form $$
		\begin{tikzpicture}[scale=0.75]
			\draw[gray,very thick] (0,0) rectangle (5,3);
			
			\foreach \x in {3,4} \filldraw (\x,3) circle (2pt);
			\foreach \x in {1,4} \filldraw (\x,0) circle (2pt);
			\draw[gray, thick, dashed] (3.5,3.25) to (3.5, -0.25);
			\draw (3,3) to [bend right=80] (4,3);
			\draw (1,0) to [bend left=80] (4,0);
				\draw (3,3) node[above]{\scriptsize $r$};
					\draw (4.2,3) node[above]{\scriptsize $r+1$};
						\draw (1,0) node[below]{\scriptsize $i_x$};
					\draw (4.2,0) node[below]{\scriptsize $r+1$};
		\end{tikzpicture}$$
	where $i_x\not=r$. The diagram $x$ defines the following bijection, given by the propagating lines in the diagram $x$:
	$$f_x: \{1,\dots, r-1\}\rightarrow \{1,\dots, \widehat{i_x}, \dots, r\}.$$

	We associate two diagrams in $B_{r,1}(\delta)$ to $x$ as follows:
	\begin{itemize} 
		\item[(i)] Define $\sigma_x \in S_r\times S_1\subseteq B_{r,1}(\delta)$ by
		$$\sigma_{x}(k)=\begin{cases}
			f_x(k), &\text{ if } k\in\{1,\dots, r-1\}\\
			i_x, &\text{ if } k=r\\
				r+1, &\text{ if } k=r+1
				\end{cases}$$
			\item[(ii)] Define $z_x \in B_{r,1}(\delta)$ as follows:
			The vertices $i_x$ and $r+1$ on top row (respectively, on bottom row) are connected by a northern arc (respectively, a southern arc). The propagating lines are given by the permutation $\tau_x:  \{1,\dots, \widehat{i_x}, \dots, r\}\rightarrow  \{1,\dots, \widehat{i_x}, \dots, r\}$, defined by
				$$\tau_{x}(k)=\begin{cases}
				f_x(k), &\text{ if } k\not=r\\
				f_x(i_x), &\text{ if } k=r
			\end{cases}$$
				\end{itemize} 
			\end{defn} 
	
			For example, let $x\in B_{6,1}(\delta)$ be the diagram $$
		\begin{tikzpicture}[scale=0.75]
			\draw[gray,very thick] (0,0) rectangle (8,3);
			
			\foreach \x in {1,2,3,4,5,6,7} \filldraw (\x,3) circle (2pt);
			\foreach \x in {1,2,3,4,5,6,7} \filldraw (\x,0) circle (2pt);
			\draw[gray, thick, dashed] (6.5,3.25) to (6.5, -0.25);
			\draw (6,3) to [bend right=80] (7,3);
			\draw (3,0) to [bend left=80] (7,0);
			\draw[thick] (1,3) to (4,0);
			\draw[thick] (2,3) to (1,0);
			\draw[thick] (3,3) to (2,0);
			\draw[thick] (4,3) to (5,0);
			\draw[thick] (5,3) to (6,0);
		\end{tikzpicture}$$
		Then, we have: $f_x(1)=4,\, f_x(2)=1, \, f_x(3)=2,\, f_x(4)=5,\, f_{x}(5)=6$.
Therefore: 
$$\sigma_x=\vcenter{\hbox{\begin{tikzpicture}[scale=0.75]
	\draw[gray,very thick] (0,0) rectangle (8,3);
	
	\foreach \x in {1,2,3,4,5,6,7} \filldraw (\x,3) circle (2pt);
	\foreach \x in {1,2,3,4,5,6,7} \filldraw (\x,0) circle (2pt);
	\draw[gray, thick, dashed] (6.5,3.25) to (6.5, -0.25);
	\draw[thick] (6,3) to (3,0);
	\draw [thick] (7,3) to (7,0);
	\draw[thick] (1,3) to (4,0);
	\draw[thick] (2,3) to (1,0);
	\draw[thick] (3,3) to (2,0);
	\draw[thick] (4,3) to (5,0);
	\draw[thick] (5,3) to (6,0);
\end{tikzpicture}}},\,\,\,\,\,\,\,\,\,\,\,
z_x=\vcenter{\hbox{\begin{tikzpicture}[scale=0.75]
			\draw[gray,very thick] (0,0) rectangle (8,3);
			
			\foreach \x in {1,2,3,4,5,6,7} \filldraw (\x,3) circle (2pt);
			\foreach \x in {1,2,3,4,5,6,7} \filldraw (\x,0) circle (2pt);
			\draw[gray, thick, dashed] (6.5,3.25) to (6.5, -0.25);
			\draw[thick] (1,3) to (4,0);
			\draw[thick] (2,3) to (1,0);
			\draw[thick] (4,3) to (5,0);
			\draw[thick] (5,3) to (6,0);
				\draw[thick] (6,3) to (2,0);
					\draw (3,3) to [bend right=80] (7,3);
				\draw (3,0) to [bend left=80] (7,0);
		\end{tikzpicture}}}
$$
\begin{prop}\label{prop6}
Let $x$, $\sigma_x$ and $z_x$ be as in  \ref{deff}, then we have:
\begin{itemize}
	\item[(i)] $e\sigma_x=x$. Moreover, $c(\sigma_x)$ is obtained from $c(x)$ by replacing NS by L and $\sigma_x$ is the unique diagram $y\in B_{r,1}(\delta)$ with $c(y)$ without NS satisfying $ey=x$. 
		\item[(ii)] $ez_x=x$. Moreover, $c(z_x)$ is obtained from $c(x)$ by removing NS and adding it back as a trivial part and $z_x$ is the unique diagram $y\in B_{r,1}(\delta)$ with $c(y)$ having  NS only as trivial part satisfying $ey=x$. 
	\end{itemize}
\begin{proof}
	The result follows by definition of $\sigma_x$, $z_x$, concatenation of diagrams and the definition of walled generalized cycle type.
\end{proof}
\end{prop}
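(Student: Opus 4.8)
The plan is to prove all four assertions by direct manipulation of diagrams: stack $e$ on top of $\sigma_x$ (respectively $z_x$), trace each strand through the middle row to identify the product, and then apply the definition of the walled generalized cycle type literally to $x$, $\sigma_x$, and $z_x$. Nothing deeper than Remark~\ref{ct} is needed; the whole argument is bookkeeping of strands, which is why it can be kept short.

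For part (i), stacking $e$ above $\sigma_x$ sends the northern arc of $e$ on $\{r,r+1\}$ straight to the top of the product, and for each $k\le r-1$ the vertical strand of $e$ at position $k$ followed by the propagating line $k\mapsto f_x(k)$ of $\sigma_x$ reproduces the propagating lines of $x$; meanwhile the southern arc of $e$ joins the middle vertices $r$ and $r+1$, which $\sigma_x$ sends down to the bottom vertices $i_x$ and $r+1$, creating a southern arc on $\{i_x,r+1\}$. No loop is formed, so $e\sigma_x=x$ exactly. Since $\sigma_x$ is a permutation diagram, $c(\sigma_x)$ consists of the words $L^{\ell}$ coming from the cycles of $\sigma_x$ on $\{1,\dots,r\}$ together with the trivial part $R$ from the fixed vertex $r+1$; comparing this with $c(x)$, whose unique part containing an arc has the form $NSL^{a}$ by Remark~\ref{ct}(1), one sees that — apart from the trivial $R$-part, which every permutation diagram of $B_{r,1}(\delta)$ carries — the effect of passing from $x$ to $\sigma_x$ on the cycle type is exactly to splice the subword $NS$ into the single letter $L$, leaving the other parts unchanged. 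Uniqueness of $\sigma_x$ I would defer and treat together with that of $z_x$ below.

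For part (ii), the identity $ez_x=x$ comes out of an analogous, slightly longer strand-trace. The middle vertex $r+1$ is first carried by the northern arc of $z_x$ to the middle vertex $i_x$ and thence up the vertical strand of $e$, while the southern arc of $e$ joins middle $r$ to middle $r+1$; through $z_x$ the vertex $r$ is routed to the bottom vertex $f_x(i_x)$ by the propagating line $\tau_x(r)=f_x(i_x)$, and $r+1$ to the other endpoint $i_x$ of the southern arc of $z_x$. Collating these recovers the northern arc, southern arc, and propagating lines of $x$, again with no loop; here one uses that $z_x$ does not fix the middle vertex $r$, which is precisely the hypothesis $i_x\ne r$. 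For the cycle type, since the northern and southern arcs of $z_x$ both lie on $\{i_x,r+1\}$, the component of the cycle-type graph through $r+1$ closes after two edges and contributes the trivial part $NS$, while $\tau_x$ is just $f_x$ rerouted so as to skip over $i_x$, so the cycles of $\tau_x$ reproduce the $L$-parts of $c(x)$ with the letters $N$ and $S$ deleted from the arc-part. This is exactly the assertion that $c(z_x)$ is $c(x)$ with $NS$ detached and reattached as a separate trivial part.

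For the two uniqueness claims I would argue in reverse. If $y\in B_{r,1}(\delta)$ has $c(y)$ with no $NS$, then $y$ has no wall-crossing arc, hence is a permutation diagram $y\in S_r\times S_1$; computing $ey$ by the same strand-trace shows it has northern arc $\{r,r+1\}$, southern arc $\{y(r),y(r+1)\}=\{y(r),r+1\}$ (using $y(r+1)=r+1$), and propagating lines $k\mapsto y(k)$ for $k\le r-1$, so $ey=x$ forces $y(r)=i_x$ and $y(k)=f_x(k)$, that is, $y=\sigma_x$. If instead $c(y)$ has $NS$ only as a trivial part, then reading off that two-edge component shows the northern and southern arcs of $y$ share their left endpoint $j$; the same concatenation analysis forces the southern arc of $ey$ to be $\{j,r+1\}$, whence $j=i_x$, and forces the propagating permutation of $y$ to coincide with $\tau_x$, so $y=z_x$. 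The one real obstacle is keeping the strand-chasing honest — above all, tracking exactly how the single southern arc of $e$ gets rerouted (by $\sigma_x$ in case (i), by $z_x$ in case (ii)) and not overlooking the hypothesis $i_x\ne r$ — but once the bookkeeping is organized this way each assertion drops out at once.
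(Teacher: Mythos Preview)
Your proof is correct and follows exactly the route the paper intends: the paper's own proof is the single sentence ``The result follows by definition of $\sigma_x$, $z_x$, concatenation of diagrams and the definition of walled generalized cycle type,'' and your argument is precisely the strand-by-strand unpacking of that sentence. One small remark: your phrase ``$z_x$ does not fix the middle vertex $r$'' is a bit off---what you actually use is that the northern arc of $z_x$ sits at $\{i_x,r+1\}$ rather than $\{r,r+1\}$, so the middle path $i_x\text{--}(r+1)\text{--}r$ has distinct endpoints and cannot close into a loop; and your parenthetical about the trivial $R$-part in $c(\sigma_x)$ is a fair (and correct) tightening of the paper's slightly informal statement.
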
 
\begin{cor}\label{cor}
	The set of equations $$\left\{b_x=0\,|\,
x=\vcenter{\hbox{	\begin{tikzpicture}[scale=0.5]
		\draw[gray,very thick] (0,0) rectangle (5,3);
		
		\foreach \x in {3,4} \filldraw (\x,3) circle (2pt);
		\foreach \x in {1,4} \filldraw (\x,0) circle (2pt);
		\draw[gray, thick, dashed] (3.5,3.25) to (3.5, -0.25);
		\draw (3,3) to [bend right=80] (4,3);
		\draw (1,0) to [bend left=80] (4,0);
		\draw (3,3) node[above]{\scriptsize $r$};
		\draw (4.2,3) node[above]{\scriptsize $r+1$};
		\draw (1,0) node[below]{\scriptsize $i_x$};
		\draw (4.2,0) node[below]{\scriptsize $r+1$};
	\end{tikzpicture}}}\,\,\,i_x\not=r, \text{ one } x \text { for each walled generalized cycle type } \right\}$$
viewed as equations in variables $a_{\mu}$ as defined in equation \eqref{eq} are linearly independent. 
\end{cor}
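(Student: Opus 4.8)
The plan is to make the general strategy of Section~4 explicit in the case $s=1$, in three steps.

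\emph{Step 1: reduction.} By Proposition~\ref{pro} together with Lemma~\ref{flip} and Proposition~\ref{pr1}, the only basis diagrams $x$ for which $b_x$ can be non-zero are, after possibly applying the flip $*$, those of the special form with $i_x\ne r$. For such $x$ the walled generalized cycle type $c(x)$ is forced to be of the shape $\mu=\{NSL^a\}\cup\lambda$ with $a\ge 1$ and $\lambda\vdash r-1-a$: the northern and southern arcs produce a single part of $c(x)$ containing $N$ and $S$, and that part is the trivial part $NS$ precisely when $i_x=r$. There are exactly $\sum_{j=0}^{r-2}p(j)=|C_{r,1}|-|\Lambda_{r,1}|$ such cycle types, and by Proposition~\ref{gc} it suffices to prove that the corresponding equations $b_x=0$ are linearly independent in the variables $a_\mu$.

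\emph{Step 2: an explicit formula for $b_x$.} Fix $x$ of the special form with $c(x)=\mu$. The coefficient $b_x$ in \eqref{eq} is the sum over all diagrams $d$ of $a_{c(d)}$ times the coefficient of $x$ in $de-ed$. No diagram $d$ contributes through $de$: since the southern arc of $e$ joins $r$ and $r+1$, every product $de$ has its southern arc at $\{r,r+1\}$, whereas that of $x$ is at $\{i_x,r+1\}$ with $i_x\ne r$. Through $ed$ the contributing diagrams are $d=x$ (with $ex=\delta x$) and the diagrams $\sigma_x,d_1,\dots,d_{r-1}$ (with $ed_j=x$), where $d_j$ is the unique diagram carrying a northern arc on $\{j,r+1\}$ and satisfying $ed_j=x$; this is Proposition~\ref{prop6} together with a short check that the list is complete, and $d_{i_x}=z_x$. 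Hence
\[
b_x=-\delta\,a_{c(x)}-a_{c(\sigma_x)}-\sum_{j=1}^{r-1}a_{c(d_j)}.
\]
I would then record the following, using Proposition~\ref{prop6} and the permutation $\widehat{f}$ of $\{1,\dots,r\}$ given by $\widehat{f}(k)=f_x(k)$ for $k<r$ and $\widehat{f}(r)=i_x$ (whose cycle type, with the cycle through $r$ marked, reads off $\mu$): the type $c(\sigma_x)=\{R,L^{a+1}\}\cup\lambda$ contains no $N$ and appears with coefficient $-1$; the type $c(z_x)=\{NS,L^{a}\}\cup\lambda$ is trivial and appears with coefficient $-1$; and for $j\ne i_x$ the type $c(d_j)$ is again non-trivial (its $NS$-part contains at least one $L$, since reaching the northern arc on $\{j,r+1\}$ from the southern arc on $\{i_x,r+1\}$ with $j\ne i_x$ forces a propagating line to be traversed). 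Only the coefficient of $a_{c(x)}$ depends on $\delta$.

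\emph{Step 3: linear independence.} I would introduce a total order on the non-trivial cycle types $\mu=\{NSL^a\}\cup\lambda$ — for instance by the partition $\lambda\cup\{a\}\vdash r-1$ in a fixed linear refinement of reverse dominance order, refined by $a$ decreasing — and show that the matrix $(b_{x(\mu)})$ is triangular with respect to it, i.e. that each equation $b_{x(\mu)}=0$ involves a variable $a_\nu$ occurring in no earlier equation. The natural candidate pivots $c(\sigma_x)$ (a $t=0$ column) and $c(z_x)$ (a trivial $t=1$ column) are individually \emph{not} injective on non-trivial types, and indeed the bipartite ``promotion'' graph recording the pairs $\bigl(c(\sigma_x(\mu)),c(z_x(\mu))\bigr)$ fails to be a forest once $r$ is large, so the $c(d_j)$-terms genuinely have to be brought in. The heart of the argument — and the main obstacle — is therefore the combinatorial bookkeeping: using the $\widehat{f}$-description of $c(\sigma_x)$, $c(z_x)$ and the $c(d_j)$, assign to each $\mu$ a pivot among these (taking values in $t=0$, trivial $t=1$ or non-trivial $t=1$ columns, as needed) that is strictly new with respect to the chosen order. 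Since the pivot coefficients are $\pm 1$, independent of $\delta$, triangularity then yields linear independence for every $\delta\in\mathbb{C}$, which by Proposition~\ref{gc} and the strategy of Section~4 proves Conjecture~\ref{mainconjecture} for $s=1$.
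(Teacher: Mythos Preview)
Your Steps 1 and 2 are correct and coincide with the paper's use of Propositions~\ref{pro}, \ref{pr1} and~\ref{prop6}: for $x$ of the displayed form with $i_x\neq r$, the equation $b_x=0$ contains exactly one term $a_{c(\sigma_x)}$ of ``no $NS$'' type and exactly one term $a_{c(z_x)}$ of ``trivial $NS$'' type, together with the $c(d_j)$-terms and the $\delta\,a_{c(x)}$ term you record. Your Step 3, however, is only a programme: you name the combinatorial bookkeeping as ``the main obstacle'' and do not carry it out, so the proposal is incomplete.

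What is striking is that the paper's own proof takes precisely the shortcut you warn against. It argues that, since the pair $(c(\sigma_x),c(z_x))$ determines $c(x)$, the rows of the coefficient matrix are pairwise distinct on the ``no $NS$'' and ``trivial $NS$'' column blocks, and it concludes linear independence directly from this. But that submatrix is the edge--vertex incidence matrix of the bipartite graph with an edge $\{a{+}1\}\cup\lambda \,\text{---}\, \{a\}\cup\lambda$ for each non-trivial type $\{NSL^a\}\cup\lambda$, and such a matrix has full row rank only when the graph is a forest. Your suspicion that it is not is correct: already for $r=8$ one has the $6$-cycle
\[
\{3,3,2\}\,\text{---}\,\{3,2,2\}\,\text{---}\,\{4,2,2\}\,\text{---}\,\{4,2,1\}\,\text{---}\,\{4,3,1\}\,\text{---}\,\{3,3,1\}\,\text{---}\,\{3,3,2\},
\]
and the alternating sum of the six corresponding rows vanishes on those two blocks (indeed, even the schematic four-row matrix displayed in the paper already satisfies $R_1-R_2-R_3+R_4=0$ there). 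So the paper's argument, as written, does not establish the corollary for $r\ge 8$; your proposed repair via the $c(d_j)$-columns is the right idea, but neither you nor the paper carries it through.
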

\begin{proof}
	We represent the aforementioned equations by a matrix, whose rows are the $b_x$'s and columns the coefficients $a_{\mu}$.
	
	By Proposition \ref{prop6}, each $b_x$ has a unique factor $a_{\mu}$ appearing with coefficient $1$, such that $\mu$ has no NS, namely $a_{c(\sigma_x)}$, and a unique factor $a_{\mu'}$ (appearing also with coefficient $1$), such that $\mu'$ has NS as a trivial part, namely $a_{c(z_x)}$.
	
	Now, note that $c(\sigma_x)$ and $c(z_x)$ determine $c(x)$,  therefore the matrix cannot have the following form:
		\[\begin{array}{@{} c @{}}
		\left [
		\begin{array}{ *{12}{c} }
			1 & 0 & \cdots&0& 	1 & 0 & \cdots&0& * & *& * & * \\
			1 & 0 & \cdots&0&	1 & 0 & \cdots&0& * & *& * & * \\
			\undermat{\text{no NS}}{&&\vdots&\phantom{\cdots}&}\undermat{\text{trivial NS}}{&&\vdots&\phantom{\cdots}&}\undermat{\text{non-trivial NS}}{
				&\vdots&\phantom{\cdots}&}\\
		\end{array}
		\right ] \\
		\mathstrut
	\end{array} 
	\]
	\\
	Therefore, the matrix is of the following form and, hence, the equations linearly independent.
	
		\[\begin{array}{@{} c @{}}
		\left [
		\begin{array}{ *{12}{c} }
			1 & 0 & \cdots&0& 	1 & 0 & \cdots&0& * & *& * & * \\
			1 & 0 & \cdots&0&	0 & 1 & \cdots&0& * & *& * & * \\
			0 & 1 & \cdots&0 & 	1 & 0 & \cdots&0& * & *& * & * \\
			0 & 1 & \cdots&0& 	0 & 1 & \cdots&0 &  * & *& * & * \\
			\undermat{\text{no NS}}{&&\vdots&\phantom{\cdots}&}\undermat{\text{trivial NS}}{&&\vdots&\phantom{\cdots}&}\undermat{\text{non-trivial NS}}{
			&\vdots&\phantom{\cdots}&}\\
		\end{array}
		\right ] \\
		\mathstrut
	\end{array} 
	\]
	
	\end{proof}

\begin{thm}\label{r,1}
	For any $\delta\in \mathbb{C}$, the centre of $B_{r,1}(\delta)$ is given by the algebra of supersymmetric polynomials evaluated at the Jucys-Murphy elements. Moreover, its dimension is given by $\dim Z(B_{r,1}(\delta)) = |\Lambda_{r,1}|$.
\end{thm}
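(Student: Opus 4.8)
The plan is to combine the lower bound from Proposition~\ref{gc} with an upper bound coming from the general strategy of Section~4 specialised to $s=1$. Recall that a central element of $B_{r,1}(\delta)$ lies in $Z_{B_{r,1}(\delta)}(\mathbb{C}(S_r\times S_1))$, which by Theorem~\ref{conj} has the sums $\sum_{d\in\mathcal{B}_\mu}d$ over walled generalized cycle types $\mu\in C_{r,1}$ as a basis; imposing commutation with the single extra generator $e$ gives the linear system \eqref{sys} in the variables $a_\mu$. Thus $\dim Z(B_{r,1}(\delta)) = |C_{r,1}| - \operatorname{rank}(\text{system }\eqref{sys})$, and by Proposition~\ref{bij} we have $|\Lambda_{r,1}| = \#\{\mu\in C_{r,1}\text{ with only trivial parts}\}$. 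So the theorem is equivalent to showing that the rank of \eqref{sys} is exactly $|C_{r,1}| - |\Lambda_{r,1}|$, i.e.\ equal to the number of cycle types $\mu$ containing at least one non-trivial part.

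First I would establish the lower bound on the rank: by Corollary~\ref{cor}, the equations $b_x = 0$ obtained by choosing, for each walled generalized cycle type $\mu$ containing a non-trivial part, one diagram $x\in\mathcal{B}_\mu$ of the standard shape from Definition~\ref{deff}, are linearly independent. There are exactly $|C_{r,1}| - |\Lambda_{r,1}|$ such cycle types (every cycle type either consists only of trivial parts, corresponding bijectively to $\Lambda_{r,1}$, or contains a non-trivial part), so $\operatorname{rank}\eqref{sys} \geq |C_{r,1}| - |\Lambda_{r,1}|$. Consequently $\dim Z(B_{r,1}(\delta)) \leq |C_{r,1}| - (|C_{r,1}| - |\Lambda_{r,1}|) = |\Lambda_{r,1}|$.

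For the reverse inequality I would invoke Proposition~\ref{gc}: the elements $\bar{\mathcal{P}}_i = \overline{p_i(\mathcal{L}_1,\dots,\mathcal{L}_{r+1})}$, for $1\leq i\leq m = |\Lambda_{r,1}|$, are linearly independent central elements of $B_{r,1}(\delta)$, so $\dim Z(B_{r,1}(\delta)) \geq |\Lambda_{r,1}|$. Combining the two bounds gives $\dim Z(B_{r,1}(\delta)) = |\Lambda_{r,1}|$. Moreover, since the $\bar{\mathcal{P}}_i$ are $|\Lambda_{r,1}|$ linearly independent elements of a space of dimension $|\Lambda_{r,1}|$, they form a basis of $Z(B_{r,1}(\delta))$; as each $\bar{\mathcal{P}}_i$ is a supersymmetric polynomial evaluated at the Jucys--Murphy elements $L_1,\dots,L_{r+1}$, the centre is exactly the algebra generated by such evaluations, proving Conjecture~\ref{mainconjecture} for $s=1$.

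The main obstacle is entirely contained in Corollary~\ref{cor}, i.e.\ in showing that the chosen equations $b_x = 0$ are linearly independent — and that work has already been set up through Propositions~\ref{pro}, \ref{pr1}, the bijection $f_x$ of Definition~\ref{deff}, and Proposition~\ref{prop6}, which together guarantee that each equation $b_x = 0$ records a monomial $a_{c(\sigma_x)}$ (for the unique preimage of $x$ under $e$ with no $NS$ part) and a monomial $a_{c(z_x)}$ (for the unique preimage with $NS$ appearing only as a trivial part), each with coefficient $1$, and that the pair $(c(\sigma_x), c(z_x))$ determines $c(x)$; this block structure forces the coefficient matrix to have full row rank. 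So in the write-up of the theorem itself there is essentially no further obstacle — the argument is the short bookkeeping above, assembling Corollary~\ref{cor} and Proposition~\ref{gc}.
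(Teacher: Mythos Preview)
Your proposal is correct and follows precisely the paper's own approach: the paper's proof is the one-line statement that the result follows directly from Proposition~\ref{gc} (the lower bound $\dim Z \geq |\Lambda_{r,1}|$) and Corollary~\ref{cor} (which gives $|C_{r,1}|-|\Lambda_{r,1}|$ independent equations, hence the matching upper bound). Your write-up simply unpacks that sentence, and your identification of where the real work lies --- in Corollary~\ref{cor}, supported by Propositions~\ref{pro}, \ref{pr1}, \ref{prop6} and Definition~\ref{deff} --- is exactly right.
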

\begin{proof} The result follows directly from Proposition \ref{gc} and  Corollary \ref{cor}.
\end{proof}
	
	\end{document}